\newtheorem{thm}{Theorem}
\newtheorem{prop}[thm]{Proposition}
\newtheorem{cor}[thm]{Corollary}
\newtheorem{lem}[thm]{Lemma}
\theoremstyle{definition}
\newtheorem{dfn}[thm]{Definition}
\newtheorem{rem}[thm]{Remark}
\newtheorem{ex}[thm]{Example}
\newtheorem*{ack}{Acknowledgments}
\numberwithin{thm}{section}
\numberwithin{equation}{section}
\DeclareMathOperator{\Ant}{Ant}
\title[Noncritical maps on GCBA spaces]{Noncritical maps on geodesically complete spaces with curvature bounded above}
\author[T. Fujioka]{Tadashi Fujioka}
\address{Department of Mathematics, Osaka University, Toyonaka, Osaka 560-0043, Japan}
\email{\href{mailto:fujioka@cr.math.sci.osaka-u.ac.jp}{fujioka@cr.math.sci.osaka-u.ac.jp}, \href{mailto:tfujioka210@gmail.com}{tfujioka210@gmail.com}}
\date{\today}
\subjclass[2010]{53C20, 53C21, 53C23}
\keywords{Upper curvature bound, GCBA spaces, noncritical maps, Hurewicz fibration}
\thanks{Supported by JSPS KAKENHI Grant Numbers 15H05739, 20H00114, 22J00100}
\begin{document}

\begin{abstract}
We define and study the regularity of distance maps on geodesically complete spaces with curvature bounded above.
We prove that such a regular map is locally a Hurewicz fibration.
This regularity can be regarded as a dual concept of Perelman's regularity in the geometry of Alexandrov spaces with curvature bounded below.
As a corollary we obtain a sphere theorem for geodesically complete CAT($1$) spaces.
\end{abstract}

\maketitle

\section{Introduction}\label{sec:int}

The notions of lower and upper curvature bounds for metric spaces were introduced by Alexandrov using triangle comparison.
More specifically, a metric space has curvature $\ge\kappa$ (resp.\ $\le\kappa$) if any small geodesic triangle is ``thicker'' (resp.\ ``thinner'') than the geodesic triangle with the same sidelengths on the plane of constant curvature $\kappa$.
Metric spaces with curvature bounded below and above are abbreviated as CBB spaces and CBA spaces, respectively.
The properties of both curvature bounds are completely different in general.

CBB spaces are usually called Alexandrov spaces and play an essential role in the convergence theory of Riemannian manifolds with sectional curvature bounded below.
The fundamental theory of finite-dimensional CBB spaces was developed by Burago, Gromov, and Perelman \cite{BGP}.
They showed that topological dimension and Hausdorff dimension coincide for CBB spaces and that any CBB space contains an open dense Lipschitz manifold of its dimension.
To prove these, they defined a strainer (a collection of points around a point satisfying an orthogonal condition) and studied the distance map from points of a strainer, namely a strainer map.
Perelman \cite{Per:alex}, \cite{Per:mor} then developed the structure theory of CBB spaces by introducing the regularity of distance maps more general than that of strainer maps, which extends the regularity of distance functions introduced by Grove and Shiohama \cite{GS} in the Riemannian setting.
In particular, he proved that such a regular map is locally a bundle map and obtained a stratification of an arbitrary CBB space into topological manifolds.

On the other hand, the structure of CBA spaces is much more complicated (\cite{Kl}).
For example, the Hausdorff dimension may be bigger than the topological dimension.
To obtain some control, we assume the \textit{(local) geodesic completeness}, that is, the extension property of geodesics.
A separable, locally compact, locally geodesically complete CBA space is called a \textit{GCBA space}.
Lytchak and Nagano \cite{LN:geod}, \cite{LN:top} recently published the fundamental theory of GCBA spaces (note also that there was an unpublished work of Otsu and Tanoue \cite{OT}).
Their results suggest that the geometry of GCBA spaces has many parallels with that of CBB spaces.
They showed the coincidence of topological and Hausdorff dimensions for GCBA spaces and proved that any GCBA space can be stratified in a measure-theoretic sense so that each stratum contains an open dense Lipschitz manifold of its dimension.
They also obtained a manifold recognition theorem for GCBA spaces.
Their main technical tool is a strainer on a GCBA space, which can be seen as a dual of a strainer on a CBB space.
In particular, they showed that any strainer map is locally a Hurewicz fibration (but not a bundle map; see \cite[2.7]{N:asym} for example).

In this paper we define and study the regularity of distance maps on GCBA spaces more general than that of strainer maps, which can be regarded as a dual concept of Perelman's regularity in CBB geometry.
To define it, we introduce the following notation.
Let $\Sigma$ be a compact, geodesically complete CAT($1$) space with diameter $\pi$ (any space of directions of a GCBA space satisfies these conditions).
We define the \textit{antipodal distance} $\overline{|\ ,\ |}$ on $\Sigma$ by
\[\overline{|\xi\eta|}:=\sup_{x\in\Sigma}|\xi x|+|\eta x|-\pi\]
for $\xi,\eta\in \Sigma$, where $|\ ,\ |$ denotes the distance on $\Sigma$.
The antipodal distance is not a distance in the usual sense.
The reason for this name is the following equivalent definition.
Let $\Ant(\xi)$ denote the set of all antipodes $\bar\xi$ of $\xi$, i.e.\ $|\xi\bar\xi|=\pi$.
Then it easily follows from the geodesic completeness that
\[\overline{|\xi\eta|}=\sup_{\bar\xi\in\Ant(\xi)}|\bar\xi\eta|=\sup_{\bar\eta\in\Ant(\eta)}|\xi\bar\eta|\]
(see Lemma \ref{lem:ant} for the detail).
Thus the antipodal distance on spaces of directions reflects the branching phenomena of shortest paths in GCBA spaces.

Let $U$ be a tiny ball in a GCBA space $X$, that is, a small metric ball where triangle comparison holds (see Sec.\ \ref{sec:gcba} for the precise definition).
In this paper we usually work inside some tiny ball.
For $p,a\in U$, we denote by $\Sigma_p$ the space of directions at $p$ and by $a'_p\in\Sigma_p$ (or simply $a'$ if no confusion arises) the direction of the unique shortest path from $p$ to $a$.
Using the above notation, the definition of a strainer by Lytchak-Nagano \cite[7.2]{LN:geod} can be expressed as follows: $\{a_i\}_{i=1}^k$ in $U$ is a $(k,\delta)$-strainer at $p\in U$ if there exists $\{b_i\}_{i=1}^k$ in $U$ such that
\[\overline{|a_i'b_i'|}<\delta,\quad|a_i'a_j'|,|a_i'b_j'|,|b_i'b_j'|<\pi/2+\delta\]
in $\Sigma_p$ for any $i\neq j$.
Here the first inequality guarantees that there is an almost unique extension of the shortest path $a_ip $.
On the other hand, a strainer in a CBB space is defined by the inequalities $|a_i'b_i'|>\pi-\delta$ and $|a_i'a_j'|,|a_i'b_j'|,|b_i'b_j'|>\pi/2-\delta$ (for any choice of directions), where the first inequality means that $a_ip$ is almost extendable as a shortest path.
Note that shortest paths in CBB spaces are neither unique nor extendable in general, but do not branch, whereas shortest paths in GCBA spaces are unique and extendable at least in tiny balls, but may branch.

We now define the regularity of distance maps dealt with in this paper.
Let $\varepsilon$ and $\delta$ be small positive numbers such that $\delta\ll\varepsilon$, where the choice of $\delta$ depends only on the (local) dimension and $\varepsilon$ (more precisely, it will be determined by the proof of each statement; see Sec. \ref{sec:nac}).

\begin{dfn}\label{dfn:nc}
Let $U$ be a tiny ball in a GCBA space $X$ and $a_i\in U$ ($1\le i \le k$).
We say that $f=(|a_1\cdot|,\dots,|a_k\cdot|):U\to\mathbb R^k$ is \textit{$(\varepsilon,\delta)$-noncritical} at $p\in U$ (distinct from $a_i$) if
\begin{enumerate}
\item $\overline{|a_i'a_j'|}<\pi/2+\delta$ in $\Sigma_p$ for any $1\le i\neq j\le k$;
\item there exists $b\in U$ (distinct from $p$) such that $\overline{|a_i'b'|}<\pi/2-\varepsilon$ in $\Sigma_p$ for any $1\le i\le k$.
\end{enumerate}
We simply say $f$ is \textit{noncritical} at $p$ if it is $(\varepsilon,\delta)$-noncritical at $p$ for some $\delta\ll\varepsilon$.
\end{dfn}

\begin{rem}
The original definition \cite[3.1]{Per:alex} of $(\varepsilon,\delta)$-noncriticality in the CBB setting requires $|a_i'a_j'|>\pi/2-\delta$ and $|a_i'b'|>\pi/2+\varepsilon$ (for any choice of directions).
In case $\Sigma_p$ is a unit sphere and $a_i'$, $b'$ are unique, both definitions coincide.
Note that in this case $a_i'$ are linearly independent as vectors in Euclidean space.
\end{rem}

\begin{rem}
One can also define the \textit{$\varepsilon$-regularity} by strengthening the condition (1) to $\overline{|a_i'a_j'|}<\pi/2-\varepsilon$, as Perelman did in \cite{Per:mor}, and simplify some of the proofs.
The error $\delta$ is only used in Lemma \ref{lem:comp}.
In case $k=1$ there is no difference.
\end{rem}

Note that being a noncritical point of $f$ is an open condition by the upper semicontinuity of angle and the local geodesic completeness.
It is easy to see that if $\{a_i\}_{i=1}^k $ is a $(k,\delta)$-strainer at $p$, then $f=(|a_1\cdot|,\dots,|a_k\cdot|)$ is $(c,2\delta)$-noncritical at $p$ for some constant $c>0$ (use the $c$-openness of $f$ to find $b'$; see \cite[8.2]{LN:geod}).

\begin{ex}\label{ex:nc}
Let $X$ be the Euclidean cone over the circle of length $2\pi+\theta$, which is a geodesically complete CAT($0$) space for $\theta\ge 0$.
Then,
\begin{enumerate}
\item if $\theta<\pi/4$, there exists a noncritical map $f:X\to\mathbb R^2$ at the vertex $o$;
\item if $\pi/4\le\theta<\pi$, there exists a noncritical function $f:X\to\mathbb R$ at $o$, but no noncritical map $f:X\to\mathbb R^2$ at $o$;
\item if $\theta\ge\pi$, there exists no noncritical function $f:X\to\mathbb R$ at $o$.
\end{enumerate}
\end{ex}

\begin{ex}\label{ex:iso}
Let $p$ be an isolated singularity of a GCBA space, that is, a non-manifold point such that its punctured neighborhood is a manifold.
Then $\overline{|\xi\eta|}=\pi$ for any $\xi,\eta\in\Sigma_p$.
In particular, there exists no noncritical function at $p$.
Indeed, if $\overline{|\xi\eta|}<\pi$, then $\Sigma_p$ is covered by the two open balls of radius $\pi$ centered at $\xi$ and $\eta$, both of which are contractible.
Since $\Sigma_p$ is a homology manifold (\cite[3.3, 3.4]{LN:top}), the same argument as in \cite[8.2]{LN:top} shows that $\Sigma_p$ has the homotopy type of a sphere.
Therefore the theorem of Lytchak-Nagano \cite[1.1]{LN:top} implies that $p$ is a manifold point, which is a contradiction.
\end{ex}

We prove the following two theorems generalizing the results of Lytchak-Nagano for strainer maps.
The first extends \cite[8.2, 11.2]{LN:geod} and the second extends \cite[5.1]{LN:top}.
As before, $U$ denotes a tiny ball in a GCBA space $X$.
Let $T_p$ denote the tangent cone at $p\in U$.
Note that the local dimension $\dim T_p$ may not be constant.
We denote by $c(\varepsilon)$ a positive constant depending only on the (local) dimension and $\varepsilon$.

\begin{thm}\label{thm:open}
Let $f:U\to\mathbb R^k$ be an $(\varepsilon,\delta)$-noncritical map at $p\in U$ for $\delta\ll\varepsilon$.
Then $k\le\dim T_p$ and $f$ is $c(\varepsilon)$-open near $p$.
Furthermore if $k=\dim T_p$, then $f$ is a bi-Lipschitz open embedding near $p$.
\end{thm}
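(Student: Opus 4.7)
The plan is to first establish an infinitesimal openness for the ``differential'' $d_pf\colon\Sigma_p\to\mathbb R^k$, $\xi\mapsto(-\cos|a_i'\xi|)_i$, then lift this to the claimed $c(\varepsilon)$-openness of $f$ by a standard iteration, and finally handle the dimension-equality case. For the infinitesimal step I would extract three families of control directions in $\Sigma_p$ from the hypotheses. By geodesic completeness of $\Sigma_p$, any antipode $\bar b'$ of $b'$ exists, and condition (2) gives $|a_i'\bar b'|\le\overline{|a_i'b'|}<\pi/2-\varepsilon$; the CAT($1$) triangle inequality then forces $|a_i'b'|>\pi/2+\varepsilon$, so $d_pf(\bar b')$ has all entries $<-\sin\varepsilon$ while $d_pf(b')$ has all entries $>\sin\varepsilon$. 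Similarly, any antipode $\bar a_i'$ of $a_i'$ satisfies $|a_j'\bar a_i'|<\pi/2+\delta$ by condition (1), so $d_pf(\bar a_i')$ has $i$-th entry $+1$ and each other entry $\le\sin\delta$. Starting from $\bar b'$ and bending successively along short $\Sigma_p$-geodesics toward the antipodes $\bar a_i'$, I would show that every target $v\in B(0,c(\varepsilon))\subset\mathbb R^k$ is realized as $d_pf(\xi)$ for some $\xi$; the CAT($1$) law of cosines keeps the cross-coordinate terms $-\cos|a_j'\cdot|$ of order $\sin\delta$ all along each bend, and the hypothesis $\delta\ll\varepsilon$ absorbs the accumulated error. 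As a by-product $d_pf$ is a Lipschitz surjection of $\Sigma_p$ onto a $k$-ball, forcing $\dim\Sigma_p\ge k-1$ and hence $k\le\dim T_p$.

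To globalize, I use that noncriticality is an open condition on $p$, so the same infinitesimal data is available at every $q$ in a neighborhood. Given a target $w\in B(f(q),r)$ with $r$ small, I apply the infinitesimal step at $q$ with direction realizing $(w-f(q))/|w-f(q)|$, move along the corresponding geodesic a distance $\sim|w-f(q)|/c(\varepsilon)$, and use the standard first-variation estimate on a CAT($\kappa$) ball to conclude $|f(q_1)-w|\le\tfrac12|w-f(q)|$ at the endpoint $q_1$. Iterating yields a Cauchy sequence converging to $q_\infty\in B(q,c(\varepsilon)^{-1}|w-f(q)|)$ with $f(q_\infty)=w$, i.e.\ $c(\varepsilon)$-openness near $p$. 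When $k=\dim T_p$ the Lipschitz upper bound $|f(x)-f(y)|\le\sqrt k\,|xy|$ is immediate, so I only need injectivity: if $f(x)=f(y)$ for two nearby distinct points, $c(\varepsilon)$-openness on the disjoint balls $B(x,|xy|/3)$ and $B(y,|xy|/3)$ would produce two disjoint preimages of a full ball in $\mathbb R^k$, contradicting the local $k$-dimensional Hausdorff measure comparison for GCBA spaces of \cite{LN:geod}.

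The main technical difficulty is the infinitesimal step, because condition (1) controls only the antipodal distance $\overline{|a_i'a_j'|}$ and not $|a_i'a_j'|$ itself. A naive bend from $p$ toward $\bar a_i'$ could therefore swing some $f_j$ by an order-$1$ amount in the wrong direction (when $a_j'$ happens to lie near $\bar a_i'$). Bending from $\bar b'$ rather than from another $\bar a_j'$ circumvents this because both $\bar b'$ and $\bar a_i'$ lie in the half-space $\{-\cos|a_j'\cdot|\le\sin\delta\}$ of $\Sigma_p$ (coming from (2) and (1) respectively), and the CAT($1$) convexity of $\cos|a_j'\cdot|$ along the $\Sigma_p$-geodesic between them confines the whole path to that half-space up to an $O(\delta)$ error; this is precisely where the smallness $\delta\ll\varepsilon$ is used.
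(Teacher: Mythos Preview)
Your overall strategy (infinitesimal openness, then iteration) is the same as the paper's, but the infinitesimal step has a real gap. The claim that ``CAT($1$) convexity of $\cos|a_j'\cdot|$ confines the whole path to the half-space'' is based on a false premise: in CAT($1$) the function $u(t)=\cos|a_j'\gamma(t)|$ satisfies only the modified comparison $u''+u\le 0$ (equivalently, $u$ lies above the spherical interpolant), not concavity. When both endpoints of a bend sit near the equator $\{|a_j'\cdot|=\pi/2\}$ and the bend has length close to $\pi$, the spherical interpolant dips to $-1$, so no useful upper bound on $-\cos|a_j'\cdot|$ follows. Your argument is salvageable for the \emph{first} bend, since $\bar b'$ lies deep in the half-space ($|a_j'\bar b'|<\pi/2-\varepsilon$); but after that bend the new basepoint $\xi_1$ only satisfies $|a_j'\xi_1|\le\pi/2+O(\delta)$, so the second bend toward $\bar a_2'$ loses the protection. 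Even along the first bend you obtain only the one-sided bound $-\cos|a_j'\cdot|\le\sin\delta$, with no lower bound, so you cannot hit a prescribed target $v$. Note also that when $k=\dim T_p$ one has $\dim\Sigma_p=k-1$, so a Lipschitz image of $\Sigma_p$ cannot contain a $k$-ball; your surjectivity claim is too strong in exactly the case you later need. The paper avoids all of this by proving inductively on $\dim\Sigma$ (Proposition~\ref{prop:dir}) that there exist directions $v_i\in\Sigma_p$ with $|v_ia_i'|<\pi/2-c(\varepsilon)$ and $|v_ia_j'|=\pi/2$ \emph{exactly} for $j\ne i$; these feed directly into Perelman's open-map Lemma~\ref{lem:open}. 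The inductive step is a closest-point argument in level sets and uses the openness of the lower-dimensional map --- this is where the hypothesis is genuinely used, not just the raw inequalities on $\bar b',\bar a_i'$.

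Your injectivity argument also does not close. Two disjoint preimages of a ball give only an inequality of the form $\mathcal H^k(B(p,r))\ge 2c(\varepsilon)^k\omega_k r^k$, which is compatible with the Lytchak--Nagano measure bounds and yields no contradiction (the inequality is scale-invariant, so letting $|xy|\to 0$ does not help). The paper's argument is different and sharper: if $f(x)=f(y)$ with $|bx|\ge|by|$, then $|a_i'y'|,|b'y'|\le\pi/2$ in $\Sigma_x$, so by Lemma~\ref{lem:ind} the collection $\{(a_i')'\}_{i=1}^k$ is noncritical in $\Sigma_{y'}$, forcing $k\le\dim\Sigma_{y'}+1\le\dim\Sigma_x$; combined with upper semicontinuity of $\dim T_x\le k$ this is a contradiction.
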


\begin{thm}\label{thm:hure}
Let $f:U\to\mathbb R^k$ be a noncritical map at $p\in U$, where $k<\dim T_p$.
Then there exists an arbitrarily small contractible open neighborhood $V$ of $p$ such that $f:V\to f(V)$ is a Hurewicz fibration with contractible fibers.
\end{thm}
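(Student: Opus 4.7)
My strategy, parallel to Lytchak--Nagano's proof for strainer maps \cite[5.1]{LN:top}, is to combine the uniform openness from Theorem \ref{thm:open} with a controlled single-valued ``transverse'' flow in the direction of the auxiliary point $b$ from Definition \ref{dfn:nc}(2).

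First, I would propagate the noncriticality from $p$ to a neighborhood $V$. Using upper semicontinuity of $\overline{|\cdot,\cdot|}$, continuity of the directions $a_i'(q), b'(q)\in\Sigma_q$ in $q$ (well-defined in a tiny ball since shortest paths are unique there), and local geodesic completeness, $V\ni p$ can be shrunk so that $f$ is uniformly noncritical at every $q\in V$ with the same witnessing point $b$. Theorem \ref{thm:open} then gives $c(\varepsilon)$-openness of $f\colon V\to f(V)$ with a uniform constant; and since any tiny ball in a GCBA space is contractible (Lytchak--Nagano), $V$ can be taken contractible.

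Next, I would construct the flow $\Phi_t(x)$ as the unit-speed geodesic from $x$ towards $b$, which is well-defined and continuous in $x$ on a tiny ball. The first variation formula together with the uniform bound $\overline{|a_i'(x)\,b'(x)|}<\pi/2-\varepsilon/2$ yields $\tfrac{d}{dt}|a_i\,\Phi_t(x)|\le-\sin(\varepsilon/2)$ for all $i$ and $x\in V$, so $\Phi$ is a continuous, single-valued motion decreasing each $|a_i\cdot|$ at a uniformly positive rate, transverse to every fiber of $f$. Homotopy lifting is then obtained by alternating short openness-lifts (from Theorem \ref{thm:open}) that track a target path in $f(V)$ with brief applications of $\Phi$ to correct any drift; both moves depend continuously on the data, and passing to the limit produces a continuous lifting function, verifying the Hurewicz property. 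For contractibility of each fiber $F_q=f^{-1}(f(q))\cap V$, I would use the same openness-plus-$\Phi$ combination to deformation retract $F_q$ onto a single point; the hypothesis $k<\dim T_q$ ensures that $b'(q)$ is not ``exhausted'' by the $a_i'(q)$ in $T_q$, so the deformation has the required freedom.

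The main obstacle is the continuity of the lifting construction: openness is a pointwise property, and upgrading it to a continuous lifting function across a compact parameter space is where the argument must go genuinely beyond Theorem \ref{thm:open}. The single-valued flow $\Phi$ is the crucial tool here, replacing the multiple almost-orthogonal strainer directions used in the strainer-map proof by a single controlled transverse direction whose uniformity makes continuity achievable. An alternative finish, if the direct construction proves too unwieldy, is to verify local product structure up to homotopy and combine contractible fibers with Dold's theorem.
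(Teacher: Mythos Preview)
There is a genuine gap. Your central technical claim about the flow $\Phi_t$ toward $b$ has the wrong sign. The noncriticality condition $\overline{|a_i'b'|}<\pi/2-\varepsilon$ bounds the \emph{antipodal} distance; by Remark \ref{rem:ineq} this forces $|a_i'b'|>\pi/2+\varepsilon$, and the first variation formula then gives
\[
\frac{d}{dt}\,|a_i\,\Phi_t(x)|=-\cos|a_i'b'|>\sin\varepsilon>0,
\]
so $\Phi_t$ \emph{increases} every $|a_i\cdot|$, not decreases. (This is exactly how the paper uses the motion toward $b$ in the construction of $R_1$ in Proposition \ref{prop:ret}.) A single monotone flow that only raises all coordinates cannot serve as the ``correction'' needed to lift arbitrary paths in $\mathbb R^k$ once $k\ge2$; you would need directions that change the $f_i$ independently, which is the content of Proposition \ref{prop:dir}(2), not something the geodesic to $b$ provides. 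You correctly identify that continuity of the lift is the crux, but the mechanism you propose does not supply it.

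The paper's proof avoids building a lift by hand altogether. It invokes Ungar's criteria (Theorems \ref{thm:glo} and \ref{thm:loc}): once $f$ is open with locally uniformly contractible fibers and the fibers of a suitable restriction are contractible, the Hurewicz property follows automatically. The substance of the argument is therefore the retraction of Proposition \ref{prop:ret}, which is a two-step map---first flow toward $b$ to reach a ``positive cone'' $\Pi_+^s$, then take the nearest-point projection onto the convex set $\Pi_-$---together with the nontrivial verification that this projection actually lands in the fiber $\Pi$. Contractibility of full fibers in a ball $B(p,Lr)$ is then obtained by an auxiliary fibration of $(f,|p\cdot|)$ over an annulus (Lemma \ref{lem:comp} and Theorem \ref{thm:hure'}) which pushes everything into $B(p,r)$, where Corollary \ref{cor:ret} applies. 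If you want to salvage your outline, the missing ingredients are precisely these: a continuous retraction onto fibers (not a one-directional flow), and an appeal to Ungar-type theorems rather than a direct continuous lifting construction.
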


\begin{rem}
The above theorems hold for noncritical maps on CBB spaces (note that the local dimension is constant for CBB spaces).
Moreover, any $(\varepsilon,\delta)$-noncritical map at $p$ in a CBB space is a bundle map near $p$ with conical fibers, provided that $\delta$ is sufficiently small compared to the volume of $\Sigma_p$.
See \cite{Per:alex} for the details (cf.\ \cite{Per:mor}).
\end{rem}

As a corollary we obtain the following sphere theorem.
For other sphere theorems, see \cite{N:vol} and the references therein.
We may assume that the diameter of a geodesically complete CAT($1$) space is exactly $\pi$ by considering the $\pi$-truncated metric if necessary (see also Remark \ref{rem:bilip}).

\begin{cor}\label{cor:sph}
Let $\Sigma$ be a compact, geodesically complete CAT(1) space of dimension $n$.
Assume that there exist $\{\xi_i\}_{i=1}^{n+1}$ and $\eta$ in $\Sigma$ such that
\[\overline{|\xi_i\xi_j|}<\pi/2+\delta,\quad\overline{|\xi_i\eta|}<\pi/2-\epsilon\]
for any $i\neq j$ and $\delta\ll\varepsilon$.
Then $\Sigma$ is bi-Lipschitz homeomorphic to $\mathbb S^n$.
\end{cor}

\begin{rem}
The above estimate is optimal in the following sense.
Let $T$ be a tripod, that is, a metric space consisting of three points with pairwise distance $\pi$.
Let $\Sigma$ be the spherical join between $\mathbb S^{n-1}$ and $T$, which is not homeomorphic to $\mathbb S^n$.
We regard $\mathbb S^{n-1}$ and $T$ as isometrically embedded in $\Sigma$.
Choose $\{\xi_i\}_{i=1}^{n+1}\subset\mathbb S^{n-1}$ such that $\overline{|\xi_i\xi_j|}<\pi/2$ and $\eta\in T$.
Then we have $\overline{|\xi_i\eta|}=\pi/2$ for all $i$.
\end{rem}

\begin{rem}
Lytchak-Nagano \cite[1.5]{LN:top} also proved the following sphere theorem:
if $\Sigma$ is a compact, geodesically complete CAT(1) space with no tripods, then it is homeomorphic to a sphere.
The author does not know whether the assumption of the above corollary implies the absence of tripods.
\end{rem}

\begin{rem}
There is the CBB counterpart of the above corollary:
if $\Sigma$ is a CBB($1$) space of dimension $n$ and if $\{\xi_i\}_{i=1}^{n+1}$ and $\eta$ in $\Sigma$ satisfy $|\xi_i\xi_j|>\pi/2-\delta$ and $|\xi_i\eta|>\pi/2+\epsilon$, then $\Sigma$ is bi-Lipschitz homeomorphic to $\mathbb S^n$.
Moreover, if there exist $\{\xi_i\}_{i=1}^k$ and $\eta$ in $\Sigma$ satisfying the same inequalities, where $k\le n$, then $\Sigma$ is homeomorphic to a $k$-fold suspension (\cite[4.5]{Per:alex}, \cite[Theorem C]{GW}).
\end{rem}

This paper is organized as follows.
In Sec.\ \ref{sec:nac} we introduce some notation used in this paper.
In Sec.\ \ref{sec:pre} we give preliminaries on GCBA spaces, $\varepsilon$-open maps, and Hurewicz fibrations.
In Sec.\ \ref{sec:inf} we study the properties of the differential of a noncritical map.
In Sec.\ \ref{sec:loc} we first prove Theorem \ref{thm:open} by using the results of the previous section.
We then construct a local neighborhood retraction to the fiber of a noncritical map to prove Theorem \ref{thm:hure}.
Finally we give the proof of Corollary \ref{cor:sph}.

\section{Notation and conventions}\label{sec:nac}

We will use the following standard notation.
For points $p$, $q$ in a metric space, $|pq|$ denotes the distance between them.
For $r>0$, we denote by $B(p,r)$ (resp.\ $\bar B(p,r)$) the open (resp.\ closed) metric ball of radius $r$ centered at $p$.
The boundary $\partial B(p,r)$ is defined by the difference $\bar B(p,r)\setminus B(p,r)$.

We will also use the following notation from \cite{Per:alex}.
As in the introduction, $\varepsilon$ and $\delta$ denote positive numbers such that $\delta\ll\varepsilon$.
The choice of $\delta$ depends only on the (local) dimension, the upper curvature bound, and $\varepsilon$ (the dependence on the upper curvature bound is not necessary if it is taken to be nonnegative).
Whenever $\varepsilon$ and $\delta$ appear in a statement, it means that the statement holds for a suitable choice of $\delta$ depending on $\varepsilon$, which will be determined by the proof.
We denote by $c(\varepsilon)$ various positive constants such that $c(\varepsilon)\ll\varepsilon$, and by $\varkappa(\delta)$ various positive functions such that $\varkappa(\delta)\to0$ as $\delta\to 0$.
They also depend only on the (local) dimension, the upper curvature bound, and $\varepsilon$.
In particular we may assume $\varkappa(\delta)\ll c(\varepsilon)$ by taking $\delta\ll\varepsilon$.
Whenever $c(\varepsilon)$ and $\varkappa(\delta)$ appear in a statement, it means that the statement holds for some $c(\varepsilon)$ and $\varkappa(\delta)$ determined by the proof.

\section{Preliminaries}\label{sec:pre}

\subsection{GCBA spaces}\label{sec:gcba}

Here we recall basic notions and facts about GCBA spaces.
We refer the reader to \cite{LN:geod} and \cite{BBI} for more details.
We assume all metric spaces are separable and locally compact unless otherwise stated.

Let $\kappa\in\mathbb R$.
We denote by $S^2_\kappa$ the complete simply-connected surface of constant curvature $\kappa$ and by $D_\kappa$ the diameter of $S^2_\kappa$.
A complete metric space is called a \textit{CAT($\kappa$) space} if any two points with distance $<D_\kappa$ can be joined by a shortest path and if any geodesic triangle with perimeter $<2D_\kappa$ is not thicker than the comparison triangle on $S^2_\kappa$.
A metric space is called a \textit{CBA($\kappa$) space} if any point has a CAT($\kappa$) neighborhood.
For example, a complete Riemannian manifold is CBA($\kappa$) if and only if its sectional curvature is bounded above by $\kappa$ and is CAT($\kappa$) if in addition its injectivity radius is bounded below by $D_\kappa$.
One can also construct numerous examples by Reshetnyak's gluing theorem: a gluing of two CBA($\kappa$) (resp.\ CAT($\kappa$)) spaces along their isometric convex subsets is again a CBA($\kappa$) (resp.\ CAT($\kappa$)) space.

Let $X$ be a CBA($\kappa$) space.
A geodesic is a curve that is locally a shortest path.
We say that $X$ is \textit{locally geodesically complete} if any geodesic can be extended to a geodesic beyond its endpoints, and that $X$ is \textit{geodesically complete} if the extension can be defined on $\mathbb R$.
For example, if a small punctured ball at each point of $X$ is noncontractible, then $X$ is locally geodesically complete.
In particular, any homology manifold with a CBA metric is locally geodesically complete.
If $X$ is complete, then the local geodesic completeness is equivalent to the geodesic completeness.
A separable, locally compact, locally geodesically complete CBA space is called a \textit{GCBA space}.

Let $X$ be a GCBA($\kappa$) space.
The angle between two shortest paths is defined by the limit of comparison angles.
The \textit{space of directions} at $p$, denoted by $\Sigma_p$, is the set of the directions of shortest paths emanating from $p$ equipped with the angle metric.
$\Sigma_p$ is a compact, geodesically complete CAT($1$) space with diameter $\pi$.
By the local geodesic completeness, for any given direction, there exists a shortest path starting in that direction.
Furthermore any direction has at least one opposite direction.
The \textit{tangent cone} $T_p$ at $p$ is the Euclidean cone over $\Sigma_p$.
$T_p$ is isometric to the blow-up limit of $X$ at $p$ and is a geodesically complete CAT($0$) space.

The \textit{dimension} of a GCBA space is defined by the Hausdorff dimension, which coincides with the topological dimension.
Note that the local dimension $\dim T_p$ is finite and upper semicontinuous, but not necessarily constant.

We say that a metric ball $U$ in $X$ of radius $r$ is \textit{tiny} if the closed concentric ball of radius $10r$ is a compact CAT($\kappa$) space and $r<\min\{D_\kappa/100,1\}$.
In this paper we usually work inside some tiny ball.
Any two points in $U$ are joined by a unique shortest path contained in $U$ and any shortest path in $U$ can be extended to a shortest path of length $9r$.
The angle is upper semicontinuous in $U$.
For $p,a\in U$, we denote by $a'_p\in\Sigma_p$ (or simply $a'$ if no confusion arises) the direction of the unique shortest path from $p$ to $a$.
Let $f=|a\cdot|$.
Then $f$ is convex on $U$ and the directional derivative $f'$ of $f$ on $\Sigma_p$ is given by the first variation formula $f'=-\cos|a'\cdot|$.

The directional derivative is defined as follows.
Let $f$ be a (locally Lipschitz) function defined on an open subset $U$ of a GCBA space.
For $p\in U$ and $\xi\in\Sigma_p$, let $\gamma_\xi(t)$ denote a shortest path starting at $p$ in the direction $\xi$ and parametrized by arclength.
The \textit{directional derivative} $f'(\xi)$ of $f$ in the direction $\xi$ is defined by $\lim_{t\to 0}t^{-1}(f(\gamma_\xi(t))-f(p))$, if the limit exists and is independent of the choice of a shortest path $\gamma_\xi$.

\subsection{\boldmath{$\varepsilon$}-open maps}

Let $f:X\to Y$ be a continuous map between metric spaces and let $\varepsilon$ be a (small) positive number.
We say that $f$ is \textit{$\varepsilon$-open} if for any $x\in X$ and any sufficiently small $r>0$, we have $B(f(x),\varepsilon r)\subset f(B(x,r))$.

We will use the following two lemmas from \cite{Per:alex} regarding $\varepsilon$-open maps.
The proofs are straightforward and do not rely on any curvature assumption.
A map from an open subset of a GCBA space to Euclidean space is called \textit{differentiable} if each coordinate function has directional derivative in any direction.

\begin{lem}[{\cite[2.1.2]{Per:alex}}]\label{lem:open}
Let $U$ be an open subset of a GCBA space $X$.
Suppose that a differentiable map $f=(f_1,\dots,f_k):U\to\mathbb R^k$ satisfies the following property:
for any $p\in U$,
\begin{enumerate}
\item there exists $\xi_i\in\Sigma_p$ ($1\le i\le k$) such that $f_i'(\xi_i)<-\varepsilon$ and $|f_j'(\xi_i)|<\delta$ for any $j\neq i$, where $\delta\ll\varepsilon$;
\item there exists $\eta\in\Sigma_p$ such that $\varepsilon<f_j'(\eta)<\varepsilon^{-1}$ for any $j$.
\end{enumerate}
Then $f$ is $c(\varepsilon)$-open on $U$ with respect to the Euclidean norm in $\mathbb R^k$.
The choices of $\delta$ and $c(\varepsilon)$ depend only on $k$ and $\varepsilon$.
\end{lem}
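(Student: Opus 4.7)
The plan is to prove $c(\varepsilon)$-openness by a gradient-descent style iteration. I would fix $p \in U$ and a small $r > 0$ so that $B(p, r)$ is compactly contained in $U$, and fix a target $w \in \mathbb{R}^k$ with $|w| \le c(\varepsilon) r$. I will build a sequence $p_0 = p, p_1, p_2, \dots$ inside $B(p, r)$ with $f(p_n) \to f(p) + w$, then extract the desired preimage $q \in \overline{B(p, r)}$ by compactness. At each step I would move a controlled distance in a direction of $\Sigma_{p_n}$ chosen to reduce $|f(p_n) - (f(p) + w)|$ by a geometric factor.

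The key input is a quantitative linear-algebra claim: at every point of $U$, the collection $\{f'(\xi_1), \dots, f'(\xi_k), f'(\eta)\} \subset \mathbb{R}^k$ positively spans $\mathbb{R}^k$ with a constant depending only on $k$ and $\varepsilon$. Concretely, for every $v \in \mathbb{R}^k$ there should exist $\zeta \in \{\xi_1, \dots, \xi_k, \eta\}$ with $\langle f'(\zeta), v \rangle \ge c(\varepsilon) |v|$. I would verify this by a case split. If some coordinate satisfies $v_j \le -c(\varepsilon) |v|$, then hypothesis (1) gives $\langle f'(\xi_j), v \rangle = -f_j'(\xi_j)\, v_j \ge \varepsilon c(\varepsilon)|v|$. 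Otherwise the negative part of $v$ is small relative to its positive part, and hypothesis (2) yields
\[\langle f'(\eta), v\rangle \;\ge\; \varepsilon \!\!\sum_{v_j > 0}\! v_j \;-\; \varepsilon^{-1} \!\!\sum_{v_j < 0}\! |v_j| \;\ge\; c(\varepsilon) |v|.\]

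Given this claim, the iteration is routine. Set $y_n := (f(p) + w) - f(p_n)$; at step $n$ I would pick $\zeta_n \in \Sigma_{p_n}$ satisfying $\langle f'_{p_n}(\zeta_n), y_n\rangle \ge c(\varepsilon)|y_n|$, and move along the shortest path from $p_n$ in direction $\zeta_n$ a distance $t_n$ proportional to $|y_n|$. By the differentiability of $f$ (i.e., the first variation formula for distance functions in a GCBA space) one has
\[f(p_{n+1}) = f(p_n) + t_n f'(\zeta_n) + o(t_n),\]
so with the right proportionality $t_n \sim c(\varepsilon)|y_n|$ one obtains $|y_{n+1}| \le (1 - c(\varepsilon))|y_n|$. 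Geometric decay then yields $\sum_n t_n \le c(\varepsilon)^{-1}|w|$, so if $|w|$ is small enough relative to $r$ the sequence stays in $B(p, r)$, is Cauchy, and converges to a point $q$ with $f(q) = f(p) + w$ by continuity.

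The main obstacle I anticipate is calibrating the step size $t_n$ against the first-variation error: $t_n$ must be small enough for the $o(t_n)$ remainder to be absorbed by the linear decrease $-2c(\varepsilon)t_n|y_n|$ in $|y_n|^2$, yet large enough that the total path length sums to a quantity comparable to $|w|$ rather than blowing up. A secondary point is that the directions $\xi_i, \eta$ must be available at each intermediate $p_n$, but this is guaranteed because hypotheses (1)--(2) are assumed at every point of $U$.
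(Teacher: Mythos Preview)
The paper does not actually prove this lemma: it cites Perelman \cite[2.1.2]{Per:alex} and remarks that the argument is ``straightforward'' and ``do[es] not rely on any curvature assumption.'' Your plan---the quantitative positive-spanning claim for $\{f'(\xi_1),\dots,f'(\xi_k),f'(\eta)\}$ followed by an iterative descent---is exactly the standard argument behind that citation, and the linear-algebra step is correct (apart from a sign slip: $\langle f'(\xi_j),v\rangle=f_j'(\xi_j)\,v_j$, not $-f_j'(\xi_j)\,v_j$; the conclusion is unaffected since both factors are negative).

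The obstacle you flag is the only real content, and your formulation of the iteration does not yet resolve it: choosing $t_n\sim c(\varepsilon)|y_n|$ presupposes that the $o(t_n)$ remainder in the first variation is controlled uniformly in $p_n$, which is not granted by the bare hypotheses. Two standard fixes are available. First, for the intended application ($f_i=|a_i\cdot|$ in a tiny ball), triangle comparison gives a uniform $O(t^2)$ remainder, so your step-size choice does work there. Second, and this is the curvature-free version the paper alludes to, one replaces the iteration by minimizing
\[
\Phi(x)=|f(x)-w|+\tfrac{c_1}{2}\,|px|
\]
over $\bar B(p,r)$, where $c_1$ is the constant from your spanning claim. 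The penalty forces the minimizer $q$ into the interior whenever $|f(p)-w|<\tfrac{c_1}{2}r$, and at an interior minimum every directional derivative of $\Phi$ is nonnegative; combined with $|\,|p\cdot|'(\zeta)|\le 1$ this gives $g'(\zeta)\ge -c_1/2$ for all $\zeta$, contradicting your claim unless $f(q)=w$. This avoids the calibration issue entirely and yields $|pq|\le \tfrac{2}{c_1}|f(p)-w|$ from $\Phi(q)\le\Phi(p)$.
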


\begin{proof}
We may use the $1$-norm in $\mathbb R^k$.
It suffices to show that for any $p\in U$ and any $v\in\mathbb R^k$ sufficiently close to $f(p)$, there exists $q\in U$ arbitrarily close to $p$ such that $|f(q)v|\le|f(p)v|-c(\varepsilon)|pq|$.
Then one can find $r\in U$ such that $f(r)=v$ and $c(\varepsilon)|pr|\le|f(p)v|$ by a standard argument, which completes the proof.
Here and below $c(\varepsilon)$ denotes various positive constants such that $c(\varepsilon)\ll\varepsilon$ and $\varkappa(\delta)$ denotes various positive function such that $\varkappa(\delta)\to0$ as $\delta\to0$ (see Sec.\ \ref{sec:nac}).

We may assume $v=0$.
If $f_i(p)>0$ for some $i$, then the claim follows from the assumption (1).
Similarly, if $f_i(p)<0$ for all $i$, then the claim follows from the assumption (2).
Hence we may assume $f_i(p)\le 0$ for all $i$ and $f_i(p)=0$ for some $i$.
For simplicity, we assume $f_1(p)=f_2(p)=0$ and $f_i(p)<0$ for $i\ge 3$.

First, by the assumption (2), we choose $q_1\in U$ near $p$ such that $\varepsilon|pq_1|<f_i(q_1)<\varepsilon^{-1}|pq_1|$ for $i=1,2$ and $f_i(p)+\varepsilon|pq_1|<f_i(q_1)<-|pq_1|$ for $i\ge 3$.
We consider a closed subset
\begin{equation*}
A_1:=\left\{x\in U\;\middle|\;
\begin{gathered}
0\le f_1(x)\le f_1(q_1)-c(\varepsilon)|xq_1|\\
|f_i(x)-f_i(q_1)|\le\varkappa(\delta)(f_1(q_1)-f_1(x))\ (i\ge2)
\end{gathered}
\right\}.
\end{equation*}

By a standard argument using the assumption (1), one can find $q_2\in A_1$ such that $f_1(q_2)=0$.
Then we have $|q_2q_1|\le c(\varepsilon)^{-1}f_1(q_1)\le c(\varepsilon)^{-1}|pq_1|$.
Similarly we have $|f_i(q_2)-f_i(q_1)|\le\varkappa(\delta)f_1(q_1)\le\varkappa(\delta)|pq_1|$ for $i\ge 2$.
In particular $0<f_2(q_2)<c(\varepsilon)^{-1}|pq_1|$.

Next we consider
\begin{equation*}
A_2:=\left\{x\in U\;\middle|\;
\begin{gathered}
0\le f_2(x)\le f_2(q_2)-c(\varepsilon)|xq_2|\\
|f_i(x)-f_i(q_1)|\le\varkappa(\delta)(f_2(q_2)-f_2(x))\ (i\neq2)
\end{gathered}
\right\}.
\end{equation*}
Using the assumption (1), one can find $q\in A_2$ such that $f_2(q)=0$.
As above we obtain $|qq_2|\le c(\varepsilon)^{-1}|pq_1|$ and $|f_i(q)-f_i(q_2)|\le\varkappa(\delta)|pq_1|$ for $i\neq 2$.
In particular $|f_1(q)|<\varkappa(\delta)|pq_1|$.

For $i\ge3$, we have
\begin{align*}
f_i(q)&\le f_i(q_1)+\varkappa(\delta)|pq_1|\le-|pq_1|+\varkappa(\delta)|pq_1|<0,\\
f_i(q)&\ge f_i(q_1)-\varkappa(\delta)|pq_1|\\
&\ge f_i(p)+\varepsilon|pq_1|-\varkappa(\delta)|pq_1|\\
&\ge f_i(p)+c(\varepsilon)|pq_1|.
\end{align*}
We also have $|pq|\le c(\varepsilon)^{-1}|pq_1|$.
Hence we obtain $|f(q)|\le|f(p)|-c(\varepsilon)|pq_1|\le|f(p)|-c(\varepsilon)|pq|$, which completes the proof.
\end{proof}

\begin{lem}[{\cite[2.1.3]{Per:alex}}]\label{lem:dir}
Let $f:U\to\mathbb R^k$ be a differentiable $\varepsilon$-open map defined on an open subset $U$ of a GCBA space $X$.
Let $p\in U$, $\xi\in\Sigma_p$ be such that $f'(\xi)=0$.
Then there exists $q\in f^{-1}(f(p))$ arbitrarily close to $p$ such that $q'$ is arbitrarily close to $\xi$.
In particular, if $\{g_i\}_i$ is a finite collection of differentiable, locally Lipschitz functions on $U$, then one can choose $q$ so that $g_i(q)>g_i(p)$ if $g_i'(\xi)>0$ and $g_i(q)<g_i(p)$ if $g_i'(\xi)<0$.
\end{lem}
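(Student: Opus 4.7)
The plan is to displace $p$ slightly along the shortest path in direction $\xi$, then correct the tiny resulting error in the value of $f$ back to $f(p)$ via $\varepsilon$-openness; the correction in the domain will be of smaller order than the displacement, so the direction from $p$ to the corrected point stays close to $\xi$.

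Concretely, local geodesic completeness gives a shortest path $\gamma\colon[0,t_0]\to U$ with $\gamma(0)=p$ and initial direction $\xi$, and we put $p_t:=\gamma(t)$. Differentiability of $f$ together with $f'(\xi)=0$ yields $|f(p_t)-f(p)|=o(t)$. Now apply $\varepsilon$-openness of $f$ at $p_t$ with radius $s(t):=|f(p_t)-f(p)|/\varepsilon+t^{3/2}$, noting $\varepsilon s(t)>|f(p_t)-f(p)|$ and $s(t)=o(t)$. This produces $q_t\in B(p_t,s(t))$ with $f(q_t)=f(p)$.

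The key estimate is the angle bound $|q_t'\xi|=\angle q_tpp_t\to 0$. Since we are inside a tiny ball, CAT$(\kappa)$ triangle comparison bounds $\angle q_tpp_t$ by the angle at $p$ in the comparison triangle in $S^2_\kappa$ with sides $|pp_t|=t$ and $|p_tq_t|\le s(t)=o(t)$; this angle is of order $s(t)/t=o(1)$, for instance via the spherical law of sines applied to a vanishingly small triangle. Combined with $|pq_t|\le t+s(t)\to 0$, this gives $q_t\to p$ and $q_t'\to\xi$, proving the first conclusion by taking $q:=q_t$ for small $t$.

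For the ``in particular'' part, differentiability of $g$ and continuity of $g'$ on $\Sigma_p$ (automatic for a distance function, since $g'(\eta)=-\cos|a'\eta|$ is $1$-Lipschitz) give
\[g(q_t)-g(p)=|pq_t|\,g'((q_t)'_p)+o(|pq_t|),\]
with $g'((q_t)'_p)\to g'(\xi)>0$, so $g(q_t)>g(p)$ for small $t$; the argument for $h$ is symmetric. The main technical step is the angle bound, but this is a routine comparison estimate once we are localized inside a tiny ball, so I anticipate no serious obstacle.
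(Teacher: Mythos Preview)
The paper does not supply its own proof of this lemma; it cites \cite[2.1.3]{Per:alex} and remarks that the argument is ``straightforward and does not rely on any curvature assumption.'' Your proposal is precisely that standard argument---displace along a geodesic in direction $\xi$, use $\varepsilon$-openness at $p_t$ to correct the resulting $o(t)$ error in $f$ at cost $o(t)$ in the domain, and bound $\angle q_tpp_t$ by the comparison angle in the tiny ball---and it is correct.
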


\begin{proof}
Choose a point $q_1\in U$ near $p$ on a shortest path starting in the direction $\xi$ so that $|f(p)f(q_1)|<\delta|pq_1|$, where $\delta\ll\varepsilon$.
Using the $\varepsilon$-openness of $f$, we can find $q\in U$ such that $f(q)=f(p)$ and $\varepsilon|qq_1|\le|f(p)f(q_1)|$.
In particular $|qq_1|<\varkappa(\delta)|pq_1|$ and hence $\angle qpq_1<\varkappa(\delta)$, which completes the proof of the first half.

Let us show the second half.
If $g_i'(\xi)>0$, we may assume $g_i(q_1)>g_i(p)+l|pq_1|$ for some $l>0$.
Since $g_i$ is locally Lipshitz, we have $|g_i(q)-g_i(q_1)|\le L|qq_1|$ for some $L>0$.
Together with $|qq_1|\ll|pq_1|$, these imply $g_i(q)>g_i(p)$.
\end{proof}

\subsection{Hurewicz fibrations}

We assume all maps are continuous.
A map between topological spaces is called a \textit{Hurewicz fibration} if it satisfies the homotopy lifting property with respect to any space.

The following two theorems from geometric topology, used by Lytchak-Nagano \cite{LN:top} for strainer maps, provide sufficient conditions for a map to be a Hurewicz fibration.
Both are due to Ungar \cite{U} and based on Michael's selection theorem.

\begin{dfn}\label{dfn:lucf}
Let $f:X\to Y$ be a map between metric spaces.
We say that $f$ has \textit{locally uniformly contractible fibers} if the following holds:
for any $x\in X$ and every neighborhood $U$ of $x$, there exists a neighborhood $V\subset U$ of $x$ such that for any fiber $\Pi$ of $f$ intersecting $V$, the intersection $\Pi\cap V$ is contractible in $\Pi\cap U$.
\end{dfn}

\begin{thm}[{\cite[Theorem 1]{U}}]\label{thm:glo}
Let $X$, $Y$ be finite-dimensional, compact metric spaces and let $Y$ be an ANR.
Let $f:X\to Y$ be an open, surjective map with locally uniformly contractible fibers.
Then $f$ is a Hurewicz fibration.
\end{thm}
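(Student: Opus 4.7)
The plan is to deduce the homotopy lifting property via Michael's continuous selection theorem, as hinted. Given a map $g:Z\to X$ and a homotopy $H:Z\times I\to Y$ with $f\circ g=H(\cdot,0)$, I want $\tilde H:Z\times I\to X$ extending $g$ and satisfying $f\circ\tilde H=H$. The key reformulation is to view this as a selection problem for the set-valued map $\Phi:Z\times I\to 2^X$ defined by $\Phi(z,t):=f^{-1}(H(z,t))$: a continuous selection of $\Phi$ extending $g$ on $Z\times\{0\}$ is exactly the desired lift.

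First I would verify that $\Phi$ is lower semicontinuous. If $x\in\Phi(z_0,t_0)$ and $(z_n,t_n)\to(z_0,t_0)$, then openness of $f$ at $x$ produces, for each small $r>0$, a neighborhood of $H(z_0,t_0)$ contained in $f(B(x,r))$; hence for large $n$ there exist $x_n\in B(x,r)$ with $f(x_n)=H(z_n,t_n)$, i.e.\ $x_n\in\Phi(z_n,t_n)$ and $x_n\to x$. Next, locally uniformly contractible fibers translate precisely into the values $\Phi(z,t)$ being uniformly locally $n$-connected and $n$-connected for every $n$, locally in $(z,t)$. These are the Michael-type hypotheses that guarantee existence and extension of continuous selections.

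Rather than lifting over all of $Z\times I$ at once, I would follow a local-to-global strategy of Dold type. First, show that over each member $U$ of a sufficiently fine open cover of $Y$ by small ANR neighborhoods (available because $Y$ is a compact ANR), the restriction $f^{-1}(U)\to U$ is a Hurewicz fibration. To produce the local lift of a homotopy, iterate selections along a fine partition $0=t_0<\dots<t_N=1$: at step $k$, uniform contractibility of fibers near the compact set $\tilde H(Z\times\{t_k\})$ lets one continuously choose, in $z$, a path in the appropriate fiber connecting $\tilde H(z,t_k)$ to a selection over $H(z,t_{k+1})$, and reparametrizing yields a lift of $H|_{Z\times[t_k,t_{k+1}]}$. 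Finally, assemble these local Hurewicz fibrations into a global one via the Dold theorem on numerable covers, whose hypotheses are satisfied since $Y$ is paracompact.

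The main obstacle is the simultaneous continuous and compatible choice across $z$ and across consecutive times: the contractions in nearby fibers must be selectable continuously in $z$ and glued to the previous stage. This is precisely what the word \emph{uniformly} in ``locally uniformly contractible fibers'' delivers, in combination with compactness of $X$ and $Y$ (which converts the qualitative local conditions into quantitative uniform ones and makes a single scale work for all fibers intersecting a small open set in $Y$). Verifying the precise connectivity hypothesis in the form needed by Michael's selection theorem -- and in particular that ``contractible in a prescribed neighborhood'' implies the $LC^\infty\cap C^\infty$ conditions uniformly -- is the technical heart of the argument.
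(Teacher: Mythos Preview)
The paper does not prove this theorem; it is quoted verbatim from Ungar \cite{U} as a black box, with only the remark that both of Ungar's theorems are ``based on Michael's selection theorem.'' There is therefore no proof in the paper to compare your proposal against.

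That said, your sketch is broadly in the spirit of Ungar's actual argument: recast the lifting problem as a continuous selection for the set-valued map $(z,t)\mapsto f^{-1}(H(z,t))$, obtain lower semicontinuity from openness of $f$, and derive the equi-local connectedness required by Michael's theorem from the uniform contractibility of fibers. One comment: the detour through Dold's local-to-global theorem is not needed and is slightly awkward here, since the selection argument already works globally once one has equi-$LC^n$ fibers over the compact $Y$; Ungar establishes the homotopy lifting property (in fact the stronger slicing structure property) directly. Your proposed ``iterate selections along a partition of $[0,1]$'' step is also more delicate than you indicate --- choosing a path in each fiber continuously in $z$ is itself a selection problem for a carrier into a path space, and this is where Ungar's argument does real work. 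You correctly flag this as the technical heart, but the sketch as written does not yet explain how the contraction homotopies in neighboring fibers are made to depend continuously on the basepoint; that is precisely what Michael's theorem (applied to a suitable carrier, not to $\Phi$ itself) provides.
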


\begin{thm}[{\cite[Theorem 2]{U}}]\label{thm:loc}
Let $X$, $Y$ be finite-dimensional, locally compact metric spaces.
Let $f:X\to Y$ be an open, surjective map with locally uniformly contractible fibers.
Assume all fibers of $f$ are contractible.
Then $f$ is a Hurewicz fibration.
\end{thm}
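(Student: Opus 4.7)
The plan is to construct a continuous lifting function for $f$, which is the standard equivalent characterization of the Hurewicz fibration property. Recall that such a $\lambda\colon\Omega\to X^I$, on $\Omega=\{(x,\gamma)\in X\times Y^I:f(x)=\gamma(0)\}$, must satisfy $\lambda(x,\gamma)(0)=x$ and $f\circ\lambda(x,\gamma)=\gamma$. I would aim to produce $\lambda$ as a continuous selection of a naturally defined set-valued map, which is precisely why Michael's selection theorem — in the form for contractible-valued rather than convex-valued maps — is the right tool.

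The relevant set-valued map is $\Phi\colon Y\to 2^X$ sending $y$ to $f^{-1}(y)$. Openness of $f$ is exactly the lower semicontinuity of $\Phi$; the hypothesis that all fibers are contractible provides contractible values; and the locally uniformly contractible condition of Definition \ref{dfn:lucf} is designed to supply the uniform local-connectivity (``equi-$LC^n$'') estimates that replace convexity in the selection theorem for contractible values. Finite-dimensionality of $X$ and $Y$ controls the induction over skeleta in the proof of that selection theorem.

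I would carry out the construction in two stages. First, over a compact piece one applies the selection theorem directly, using locally uniform contractibility at each scale for the lower-dimensional extensions and contractibility of the entire fiber for the top-dimensional extension; this is essentially the content of the compact case (Theorem \ref{thm:glo}). Then, using local compactness of $X$ and $Y$, I would exhaust $\Omega$ by an increasing sequence of compact sets and patch the selections together. The global contractibility of every fiber is what allows one to homotope one partial selection into another, so that on overlaps the two constructions can be deformed into agreement, and a standard inductive gluing of partial lifting functions (dominated by a locally finite partition of unity on $\Omega$) produces a single continuous $\lambda$ on all of $\Omega$.

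The main obstacle is the precise verification of the hypotheses of the contractible-valued Michael theorem: namely, translating the qualitative locally uniformly contractible condition into quantitative equi-$LC^n$ estimates on every scale, and keeping track of this uniformity as one moves from fiber to fiber along the base. Closely tied to this is the bookkeeping in the patching step, where the assumption that \emph{every} fiber is contractible (not just the locally uniform contractibility, which alone would be the setting of Theorem \ref{thm:glo}) enters essentially to ensure that the selections produced on different compact pieces can be merged into a single global lifting function.
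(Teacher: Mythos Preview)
The paper does not contain a proof of this statement: Theorem~\ref{thm:loc} is quoted verbatim from Ungar \cite[Theorem~2]{U} and used as a black box. The only information the paper gives about its proof is the sentence preceding Definition~\ref{dfn:lucf}, namely that Ungar's two theorems are ``based on Michael's selection theorem.'' So there is nothing in the paper to compare your argument against.

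That said, your outline is broadly in the spirit of Ungar's approach: the lifting-function characterization of Hurewicz fibrations together with a Michael-type selection theorem for lower semicontinuous carriers with equi-$LC^n$ and $C^n$ values is indeed the engine. A couple of cautions if you intend to flesh this out. First, the selection is not applied directly to $\Phi(y)=f^{-1}(y)$ over $Y$; to build a lifting function one works over the path space and selects into fibers parametrized along paths, so the carrier and its domain are more involved than your sketch suggests. Second, your proposed two-stage ``exhaust by compacta and patch'' scheme is not how the passage from Theorem~\ref{thm:glo} to Theorem~\ref{thm:loc} is typically made; rather, one uses that a map which is locally a Hurewicz fibration (in the sense of admitting local lifting functions) and has globally contractible fibers is a genuine Hurewicz fibration, and here the contractibility of each fiber enters to kill the obstruction to gluing local lifting functions. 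Your instinct that global contractibility is precisely what upgrades the local/compact statement to the locally compact one is correct, but the mechanism is not a partition-of-unity patching of selections on $\Omega$.
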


\section{Infinitesimal properties}\label{sec:inf}

In this section we study the infinitesimal properties of noncritical maps and prove some lemmas which will be used in the next section.
This section corresponds to \cite[\S2]{Per:alex} (cf.\ \cite[\S2]{Per:mor}) in CBB geometry.
We use the notation $\varepsilon$, $\delta$, $c(\varepsilon)$, and $\varkappa(\delta)$ introduced in Sec.\ \ref{sec:nac}.
The choice of $\delta$ in this section will be determined by the proof of Proposition \ref{prop:dir}.

Throughout this section, $\Sigma$ denotes a compact, geodesically complete CAT($1$) space with diameter $\pi$.
In case $\dim\Sigma=0$, we assume $\Sigma$ is a set of finite points with pairwise distance $\pi$ and not a singleton (for the sake of induction).
Note that any two points in $\Sigma$ with distance $<\pi$ can be joined by a unique shortest path and that any shortest path in $\Sigma$ can be extended to a shortest path of length $\pi$.
As in the introduction, we define
\begin{gather*}
\overline{|\xi\eta|}:=\sup_{x\in\Sigma}|\xi x|+|\eta x|-\pi\\
\Ant(\xi):=\{\bar\xi\in\Sigma\mid|\xi\bar\xi|=\pi\}
\end{gather*}
for $\xi,\eta\in\Sigma$.
We call $\bar\xi\in\Ant(\xi)$ an \textit{antipode} of $\xi$ (not necessarily unique).

We first show the following fact mentioned in the introduction.

\begin{lem}\label{lem:ant}
\[\overline{|\xi\eta|}=\sup_{\bar\xi\in\Ant(\xi)}|\bar\xi\eta|\]
\end{lem}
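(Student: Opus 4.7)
The plan is to prove the claimed identity by establishing two inequalities, the ``easy'' direction coming from specialization in the supremum and the ``hard'' direction from extending a geodesic using the hypotheses on $\Sigma$.

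For the inequality $\overline{|\xi\eta|}\ge\sup_{\bar\xi\in\Ant(\xi)}|\bar\xi\eta|$, I would simply plug in $x=\bar\xi$ for an arbitrary $\bar\xi\in\Ant(\xi)$ into the defining supremum: since $|\xi\bar\xi|=\pi$, the quantity $|\xi x|+|\eta x|-\pi$ reduces to $|\bar\xi\eta|$. Taking the supremum over $\bar\xi\in\Ant(\xi)$ gives the desired bound. Note that $\Ant(\xi)\neq\varnothing$ because $\Sigma$ is geodesically complete (any initial direction at $\xi$ can be followed for length $\pi$).

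For the reverse inequality, I would fix $x\in\Sigma$ and produce an antipode $\bar\xi\in\Ant(\xi)$ with $|\bar\xi\eta|\ge|\xi x|+|\eta x|-\pi$. The construction splits into a trivial case and the main case. If $|\xi x|=\pi$, take $\bar\xi=x$ and equality holds. If $|\xi x|=0$, the right-hand side is $\le0$, so any antipode works. Otherwise $0<|\xi x|<\pi$, so there is a unique shortest path from $\xi$ to $x$; by the geodesic completeness and diameter-$\pi$ assumption on $\Sigma$ recalled just before the lemma, this shortest path extends to a shortest path of length $\pi$, whose endpoint $\bar\xi$ lies in $\Ant(\xi)$ with $x$ on the segment $\xi\bar\xi$, so $|x\bar\xi|=\pi-|\xi x|$. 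The triangle inequality then yields
\[|\bar\xi\eta|\ge|\eta x|-|x\bar\xi|=|\xi x|+|\eta x|-\pi,\]
and taking the supremum over $x\in\Sigma$ gives $\sup_{\bar\xi\in\Ant(\xi)}|\bar\xi\eta|\ge\overline{|\xi\eta|}$.

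No step here presents a real obstacle; the content of the lemma is essentially that the geodesic completeness of $\Sigma$ converts a supremum over all points into a supremum over antipodes. The only point requiring care is ensuring that the extended geodesic of length $\pi$ is actually a shortest path, so that $|x\bar\xi|$ equals $\pi-|\xi x|$ rather than something smaller; this is the reason for citing the diameter-$\pi$ property of $\Sigma$, under which any local geodesic of length $\le\pi$ is globally minimizing in a CAT($1$) space. The symmetric formula in terms of $\Ant(\eta)$ follows by interchanging the roles of $\xi$ and $\eta$.
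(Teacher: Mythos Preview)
Your proof is correct and follows essentially the same approach as the paper: both directions are handled identically, with the easy inequality coming from specializing $x$ to an antipode and the reverse from extending the shortest path $\xi x$ to length $\pi$ and applying the triangle inequality. Your explicit treatment of the degenerate cases $x=\xi$ and $x\in\Ant(\xi)$ matches the paper's parenthetical ``we may assume $x\neq\xi$ and $x\notin\Ant(\xi)$''.
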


\begin{proof}
The inequality $\overline{|\xi\eta|}\ge\sup_{\bar\xi\in\Ant(\xi)}|\bar\xi\eta|$ is clear from the definition.
For any $x\in\Sigma$, extend the shortest path $\xi x$ beyond $x$ to a shortest path $\xi\bar\xi$ of length $\pi$.
Then we have
\[|\xi x|+|\eta x|-\pi=|\eta x|-|\bar\xi x|\le|\eta\bar\xi|,\]
which implies the opposite inequality (we may assume $x\neq\xi$ and $x\notin\Ant(\xi)$).
\end{proof}

This section deals with the infinitesimal version of an $(\varepsilon,\delta)$-noncritical map.

\begin{dfn}\label{dfn:nci}
A collection $\{\xi_i\}_{i=1}^k$ of points of $\Sigma$ is said to be \textit{$(\varepsilon,\delta)$-noncritical} (or simply \textit{noncritical}) if the following hold:
\begin{enumerate}
\item $\overline{|\xi_i\xi_j|}<\pi/2+\delta$ for any $i\neq j$;
\item there exists $\eta\in\Sigma$ such that $\overline{|\xi_i\eta|}<\pi/2-\varepsilon$ for any $i$.
\end{enumerate}
In this case we call $\eta$ a \textit{regular direction} for $\{\xi_i\}_{i=1}^k$.
\end{dfn}

\begin{rem}\label{rem:ineq}
Lemma \ref{lem:ant} and the triangle inequality immediately imply
\[|\xi_i\xi_j|>\pi/2-\delta,\quad|\xi_i\eta|>\pi/2+\varepsilon\]
for any $i\neq j$.
Furthermore if $k\ge2$, these inequalities and Definition \ref{dfn:nci}(2) imply $|\xi_i\xi_j|<\pi-2\varepsilon$ and $|\xi_i\eta|<\pi-\varepsilon/2$ (provided $\delta<\varepsilon/2$).
\end{rem}

The next lemma enables us to use induction on the dimension of $\Sigma$.

\begin{lem}\label{lem:ind}
Suppose $\dim\Sigma\ge1$.
Let $\{\xi_i\}_{i=1}^k$ be an $(\varepsilon,\delta)$-noncritical collection in $\Sigma$ with a regular direction $\eta$.
For $x\in\Sigma$, assume either
\begin{enumerate}
\item $\xi_i,\eta\in B(x,\pi/2+\delta)$ for any $i$; or
\item $\xi_i,\eta\notin B(x,\pi/2-\delta)$ for any $i$.
\end{enumerate}
Then, $\{\xi_i'\}_{i=1}^k$ in $\Sigma_x$ is a $(c(\varepsilon),\varkappa(\delta))$-noncritical collection with a regular direction $\eta'$ (see Sec.\ref{sec:nac} for the notation).
\end{lem}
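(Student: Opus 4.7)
My plan is to transfer the noncriticality from $\Sigma$ to $\Sigma_x$ via the spherical suspension $S(\Sigma_x)$. First I would introduce the map $F\colon\Sigma\to S(\Sigma_x)$ given by $F(y)=(|xy|,y'_x)$ for $y$ with $|xy|\in(0,\pi)$, sending $x$ and each point of $\Ant(x)$ to the two poles. The CAT($1$) comparison at $x$ bounds the actual angle $|y'_xz'_x|_{\Sigma_x}$ by the spherical comparison angle, which after substitution into the suspension distance formula yields $|F(y)F(z)|_{S(\Sigma_x)}\le|yz|_\Sigma$; so $F$ is $1$-Lipschitz, and by (local) geodesic completeness it is surjective. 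Hence
\[
\overline{|F(y)F(z)|}_{S(\Sigma_x)}\le\overline{|yz|}_\Sigma
\]
by the usual monotonicity of antipodal distance under $1$-Lipschitz surjections.

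Next I would translate this suspension bound into an estimate in $\Sigma_x$ itself. For $y$ with $|xy|\in(0,\pi)$, the antipodes of $F(y)$ in $S(\Sigma_x)$ are exactly $\{(\pi-|xy|,\bar\xi):\bar\xi\in\Ant(y'_x)\}$, so the spherical cosine law applied to the suspension gives
\[
\cos\overline{|F(y)F(z)|}_{S(\Sigma_x)} = -\cos|xy|\cos|xz| + \sin|xy|\sin|xz|\cos\overline{|y'_xz'_x|}_{\Sigma_x}.
\]
Combining this identity with the preceding inequality produces the key estimate
\[
\cos\overline{|y'_xz'_x|}_{\Sigma_x}\ge\frac{\cos\overline{|yz|}_\Sigma+\cos|xy|\cos|xz|}{\sin|xy|\sin|xz|}.
\]
Applying this to $(y,z)=(\xi_i,\xi_j)$, where $\cos\overline{|\xi_i\xi_j|}_\Sigma>-\sin\delta$, will yield $\overline{|\xi_i'\xi_j'|}_{\Sigma_x}<\pi/2+\varkappa(\delta)$; applying it to $(y,z)=(\xi_i,\eta)$, where $\cos\overline{|\xi_i\eta|}_\Sigma>\sin\varepsilon$, will yield $\overline{|\xi_i'\eta'|}_{\Sigma_x}<\pi/2-c(\varepsilon)$.

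The main obstacle is controlling the denominator $\sin|xy|\sin|xz|$, which requires $|x\xi_i|$ and $|x\eta|$ to stay bounded away from both $0$ and $\pi$; this is exactly where hypothesis (1) or (2) is used. Extending the geodesic $\eta x$ beyond $x$ produces $\bar\eta\in\Ant(\eta)$ with $|x\bar\eta|=\pi-|x\eta|$ and $|\bar\eta\xi_i|\le\overline{|\xi_i\eta|}_\Sigma<\pi/2-\varepsilon$. In case (2), $|x\eta|\ge\pi/2-\delta$ combined with the triangle inequality at $\bar\eta$ forces $|x\xi_i|<\pi-\varepsilon+\delta$; a symmetric argument via $\bar\xi_i$ in case (1), together with $|\xi_i\eta|>\pi/2+\varepsilon$ from Remark~\ref{rem:ineq}, forces $|x\xi_i|>\varepsilon-\delta$. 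Thus in both cases $|x\xi_i|,|x\eta|\in(\varepsilon/2,\pi-\varepsilon/2)$, giving $\sin|x\xi_i|,\sin|x\eta|\ge\sin(\varepsilon/2)$ and $\cos|x\xi_i|\cos|x\xi_j|\ge-\sin\delta$ from the one-sided cosine bound in each case. Substituting into the key estimate and using $\delta\ll\varepsilon$ gives the desired $\varkappa(\delta)$ and $c(\varepsilon)$ bounds.
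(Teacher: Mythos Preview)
Your argument is correct and ultimately lands on the same inequality as the paper, but it is organized differently. The paper works pointwise: for an arbitrary antipode $\bar\xi_i'\in\Ant(\xi_i')\subset\Sigma_x$ it constructs $\bar\xi_i\in\Sigma$ by shooting from $x$ in direction $\bar\xi_i'$ to distance $\pi-|x\xi_i|$, checks via comparison that $\bar\xi_i\in\Ant(\xi_i)$, invokes $|\bar\xi_i\xi_j|\le\overline{|\xi_i\xi_j|}$, and then applies triangle comparison once more to bound $\cos|\bar\xi_i'\xi_j'|$. After substituting $|x\bar\xi_i|=\pi-|x\xi_i|$, the paper's displayed bound is exactly your key estimate. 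Your suspension map $F:\Sigma\to S(\Sigma_x)$ is a conceptual repackaging of these two steps: the $1$-Lipschitz property is the second comparison, and surjectivity is the antipode-lifting argument. What you gain is a clean general principle (a $1$-Lipschitz surjection contracts the antipodal distance) that could be reused; what the paper gains is brevity and no auxiliary space.

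Two small points to tighten. First, your step computing $\overline{|F(y)F(z)|}$ via antipodes tacitly uses Lemma~\ref{lem:ant} for $S(\Sigma_x)$, hence geodesic completeness of the suspension; this holds because $\Sigma_x$ is geodesically complete with diameter $\pi$, but it deserves a sentence. Second, your justification of the lower bound $|x\xi_i|>\varepsilon-\delta$ in case~(1) is phrased as ``a symmetric argument via $\bar\xi_i$,'' but no antipode is needed there: the plain triangle inequality $|x\xi_i|\ge|\xi_i\eta|-|x\eta|>(\pi/2+\varepsilon)-(\pi/2+\delta)$ does the job, and symmetrically for $|x\eta|$. This is exactly how the paper handles it. Similarly, in case~(2) you should note that the same antipode trick (now through $\bar\xi_i$) also gives the upper bound on $|x\eta|$, not just on $|x\xi_i|$.
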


\begin{proof}
We assume (1).
Note that this assumption together with $|\xi_i\eta|>\pi/2+\varepsilon$ implies $\xi_i,\eta\notin B(x,\varepsilon/2)$ (provided $\delta<\varepsilon/2$).
Let $\bar\xi_i'$ be an arbitrary antipode of $\xi_i'$ in $\Sigma_x$.
Let $\bar\xi_i$ be the point on the shortest path starting in the direction $\bar\xi_i'$ at distance $\pi-|x\xi_i|$ from $x$.
Then triangle comparison shows that $\bar\xi_i$ is an antipode of $\xi_i$.
Thus we have $|\bar\xi_i\xi_j|<\pi/2+\delta$ by the assumption of noncriticality.
By triangle comparison again, we have
\[\cos|\bar\xi_i'\xi_j'|\ge\frac{\cos|\bar\xi_i\xi_j|-\cos|x\bar\xi_i|\cos|x\xi_j|}{\sin|x\bar\xi_i|\sin|x\xi_j|}>-\varkappa(\delta),\]
where $\pi/2-\delta<|x\bar\xi_i|\le\pi-\varepsilon/2$ and $\varepsilon/2\le|x\xi_j|<\pi/2+\delta$ by the assumption (1).
Similarly since $|\bar\xi_i\eta|<\pi/2-\varepsilon$, we have $\cos|\bar\xi_i'\eta'|\ge c(\varepsilon)$.
The case (2) is similar (this assumption together with $\overline{|\xi_i\eta|}<\pi/2-\varepsilon$ implies $\xi_i,\eta\in B(x,\pi-\varepsilon/2)$).
\end{proof}

The main result of this section is the following counterpart of \cite[2.2, 2.3]{Per:alex} (cf.\ \cite[2.3]{Per:mor}) in CBB geometry.

\begin{prop}\label{prop:dir}
Let $\{\xi_i\}_{i=1}^k$ be an $(\varepsilon,\delta)$-noncritical collection in $\Sigma$ with a regular direction $\eta$, where $\delta\ll\varepsilon$.
Then
\begin{enumerate}
\item $k\le\dim\Sigma+1$;
\item there exists $v\in\Sigma$ such that
\[|v\xi_1|<\pi/2-c(\varepsilon),\quad|v\xi_i|=\pi/2,\quad|v\eta|>\pi/2+c(\varepsilon)\]
for any $i\ge2$.
\end{enumerate}
\end{prop}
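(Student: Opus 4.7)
The plan is to prove both parts simultaneously by induction on $n = \dim\Sigma$. In the base case $n = 0$, the standing convention of this section forces $\Sigma$ to be a finite set of at least two pairwise-antipodal points, and a brief case analysis of $\overline{|\xi_i\xi_j|}$ and $\overline{|\xi_i\eta|}$ under these rigid constraints already forces $k \le 1$; part (2) is then immediate with $v := \xi_1$, using $|v\eta| = |\xi_1\eta| > \pi/2 + \varepsilon$ from Remark \ref{rem:ineq}.

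For the inductive step, the key reduction is to an antipode $\bar\xi_1 \in \Ant(\xi_1)$. By Lemma \ref{lem:ant} the noncriticality conditions translate into $|\bar\xi_1\xi_j| < \pi/2 + \delta$ for $j \ge 2$ and $|\bar\xi_1\eta| < \pi/2 - \varepsilon$, so $\xi_2,\dots,\xi_k,\eta$ all lie in $B(\bar\xi_1, \pi/2+\delta)$. Hypothesis (1) of Lemma \ref{lem:ind} therefore applies at $x = \bar\xi_1$ to the sub-collection $\{\xi_j\}_{j=2}^k$, yielding that $\{\xi_j'\}_{j=2}^k \subset \Sigma_{\bar\xi_1}$ is $(c(\varepsilon),\varkappa(\delta))$-noncritical with regular direction $\eta'$. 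Since $\Sigma_{\bar\xi_1}$ is a compact geodesically complete CAT($1$) space of diameter $\pi$ and strictly lower dimension, the inductive hypothesis part (1) applied there gives $k-1 \le \dim\Sigma_{\bar\xi_1}+1 \le n$, which is part (1).

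For part (2), the inductive hypothesis also furnishes $w \in \Sigma_{\bar\xi_1}$ with $|w\xi_2'| < \pi/2 - c$, $|w\xi_j'| = \pi/2$ for $j \ge 3$, and $|w\eta'| > \pi/2 + c$. I would extend the shortest path from $\bar\xi_1$ in direction $w$ by geodesic completeness to a length slightly greater than $\pi/2$, defining a first candidate $v$. The triangle inequality $|v\xi_1| \ge \pi - |v\bar\xi_1|$ then falls on the desired side, CAT($1$) comparison in each triangle $\bar\xi_1,v,\xi_j$ and $\bar\xi_1,v,\eta$ delivers the upper bounds on $|v\xi_j|$ and $|v\eta|$, and the matching lower bounds are extracted by choosing antipodes $\bar\zeta$ of each $\zeta \in \{\xi_j,\eta\}$ (guaranteed by geodesic completeness) and using $|v\zeta| \ge \pi - |v\bar\zeta|$ together with CAT($1$) comparison in the auxiliary triangles $\bar\xi_1,v,\bar\zeta$.

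The main obstacle is obtaining the \emph{exact} equalities $|v\xi_j| = \pi/2$ for $j \ge 2$ rather than merely approximate ones: the direct CAT($1$) plus antipodal argument above only yields $\bigl||v\xi_j| - \pi/2\bigr| < \varkappa(\delta)$. I expect this to be resolved by replacing the explicit $v$ with a variational object---for instance a minimizer of the function $v \mapsto |v\xi_1|$ on the closed nonempty set $A := \{v \in \Sigma : |v\xi_i| \ge \pi/2 \text{ for all } i \ge 2,\ |v\eta| \ge \pi/2 + c\}$ (non-emptiness coming from the approximate candidate). A first-variation argument in the spirit of Lemma \ref{lem:dir} should then force every constraint $|v\xi_i| \ge \pi/2$ to be active at the minimizer, yielding the claimed equalities, while the bound on $|v\eta|$ follows from membership in $A$ and the bound on $|v\xi_1|$ from comparison with the approximate candidate.
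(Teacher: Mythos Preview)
Your argument for part (1) is correct; you reduce at an antipode $\bar\xi_1$ via case (1) of Lemma~\ref{lem:ind}, whereas the paper reduces at $\xi_1$ itself via case (2), but both routes work.

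For part (2) there are real gaps. A minor one first: the triangle inequality $|v\xi_1|\ge\pi-|v\bar\xi_1|$ is a lower bound, not the upper bound $|v\xi_1|<\pi/2-c(\varepsilon)$ you need, and in a general geodesically complete CAT($1$) space no reverse inequality is available. Also, since the inductive $w$ satisfies $|w\xi_2'|<\pi/2-c$ strictly, comparison forces $|v\xi_2|<\pi/2$, so your candidate is not even in $A$ and does not establish $A\neq\emptyset$.

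The essential gap is the variational step: a single minimization of $|v\xi_1|$ over $A$ does not force every constraint $|v\xi_i|\ge\pi/2$ to be active---there is no first-variation principle that tightens all inequality constraints at once. The paper resolves this by an iterated scheme. With
\[X_j:=\{x:|x\xi_i|=\pi/2\ (2\le i\le j),\ |x\xi_i|\ge\pi/2\ (j<i\le k),\ |x\eta|\ge\pi/2+c\}\]
one has $X_1=A$; nonemptiness of $X_1$ comes from pushing $\xi_1$ toward $\eta$. Then for each $j\ge2$ one takes $x_j$ closest to $\xi_j$ in $X_{j-1}$ and applies the inductive hypothesis \emph{in $\Sigma_{x_j}$} (not in $\Sigma_{\bar\xi_1}$) to the collection $\{\xi_i'\}_{i\ge2}$, obtaining a direction $w$ with $|w\xi_j'|<\pi/2-c$, $|w\xi_i'|=\pi/2$ for $i\ge2$, $i\neq j$, and $|w\eta'|>\pi/2+c$; Lemma~\ref{lem:dir} then yields a point of $X_{j-1}$ strictly closer to $\xi_j$, contradicting the choice of $x_j$ unless $|x_j\xi_j|=\pi/2$. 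The exact equalities $|w\xi_i'|=\pi/2$ from the inductive hypothesis are precisely what allow Lemma~\ref{lem:dir} to preserve the equalities already achieved for $2\le i<j$. Only after reaching $X_k$ does one minimize the distance to $\xi_1$ and argue, by a final contradiction, that $|v\xi_1|<\pi/2-c(\varepsilon)$. Your one-shot minimization over $A=X_1$ collapses this chain and loses the mechanism that tightens the constraints one at a time.
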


\begin{proof}
We first prove (1) by induction on $\dim\Sigma$.
If $\dim\Sigma=0$, then clearly $k\le1$ and $k=1$ is possible only when $\Sigma$ consists of two points (see the beginning of this section for the assumption on $\Sigma$).
Now suppose $\dim\Sigma\ge 1$ and $k\ge 2$.
By Remark \ref{rem:ineq} and Lemma \ref{lem:ind}(2), $\{\xi_i'\}_{i=2}^k$ in $\Sigma_{\xi_1}$ is a $(c(\varepsilon),\varkappa(\delta))$-noncritical collection with a regular direction $\eta'$.
Thus the induction completes the proof, provided $\delta\ll\varepsilon$.

Next we prove (2) by induction on $\dim\Sigma$.
We may assume $k\ge2$ by Remark \ref{rem:ineq}.
For $1\le j\le k$, set
\begin{equation*}
X_j:=\left\{x\in\Sigma\;\middle|\;
\begin{gathered}
|x\xi_i|=\pi/2\ (2\le i\le j),\\
|x\xi_i|\ge\pi/2\ (j<i\le k),\\
|x\eta|\ge\pi/2+c(\varepsilon)
\end{gathered}
\right\}.
\end{equation*}
We will show $X_j\neq\emptyset$ inductively.

Let us first show $X_1\neq\emptyset$.
By Remark \ref{rem:ineq} we have $|\xi_1\eta|>\pi/2+\varepsilon$ and $|\xi_1\xi_i|>\pi/2-\delta$ for any $i\ge 2$.
In other words, $\xi_1$ almost satisfies the inequalities in the definition of $X_1$.
We slightly move it to obtain a point of $X_1$.
As we have seen above, $\{\xi_i'\}_{i=2}^k$ is $(c(\varepsilon),\varkappa(\delta))$-noncritical in $\Sigma_{\xi_1}$ and $\eta'$ is a regular direction.
In particular $|\xi_i'\eta'|>\pi/2+c(\varepsilon)$ by Remark \ref{rem:ineq}, which means that moving $\xi_1$ toward $\eta$ along the shortest path increases the distance to $\xi_i$ with velocity at least $c(\varepsilon)$.
This estimate holds until the distance to $\xi_i$ exceeds $\pi/2$ within time $\varkappa(\delta)$.
Thus the resulting point lies in $X_1$.

Next we show $X_j\neq\emptyset$ by assuming $X_{j-1}\neq\emptyset$.
Let $x_j$ be a closest point to $\xi_j$ in $X_{j-1}$.
We show $x_j\in X_j$, i.e.\ $|x_j\xi_j|=\pi/2$.
As before, $\{\xi_i'\}_{i=2}^k$ is $(c(\varepsilon),\varkappa(\delta))$-noncritical in $\Sigma_{x_j}$ and $\eta'$ is a regular direction.
This holds in a small neighborhood of $x_j$.
Thus by Lemma \ref{lem:open}, Remark \ref{rem:ineq}, and the inductive hypothesis, the map $f=(|\xi_2\cdot|,\dots,|\xi_k\cdot|)$ is $c(\varepsilon)$-open near $x_j$, provided $\delta\ll\varepsilon$.
Furthermore by the inductive hypothesis, there exists $v\in\Sigma_{x_j}$ such that
\[|v\xi_j'|<\pi/2-c(\varepsilon),\quad|v\xi_i'|=\pi/2,\quad|v\eta'|>\pi/2+c(\varepsilon)\]
for any $i\neq j$ ($\ge2$).
Hence by Lemma \ref{lem:dir}, there exists $x\in\Sigma$ near $x_j$ such that
\[|x\xi_j|<|x_j\xi_j|,\quad|x\xi_i|=|x_j\xi_i|,\quad|x\eta|>|x_j\eta|\]
for any $i\neq j$ ($\ge2$).
This contradicts the choice of $x_j$ if $|x_j\xi_j|>\pi/2$.

Therefore $X_k\neq\emptyset$.
Let $v$ be a closest point to $\xi_1$ in $X_k$.
It suffices to show $|v\xi_1|<\pi/2-c(\varepsilon)$.
Suppose the contrary: $|v\xi_1|>\pi/2-\delta$, where $\delta=c(\varepsilon)$ will be determined by the following argument.
By Lemma \ref{lem:ind}(2), $\{\xi_i'\}_{i=1}^k$ in $\Sigma_v$ is a $(c(\varepsilon),\varkappa(\delta))$-noncritical collection with a regular direction $\eta'$.
If $k=\dim\Sigma+1$, this contradicts (1), provided $\delta\ll\varepsilon$.
If $k<\dim\Sigma+1$, the same argument as above shows that there exists $w\in\Sigma$ such that
\[|w\xi_1|<|v\xi_1|,\quad|w\xi_i|=|v\xi_i|,\quad|w\eta|>|v\eta|\]
for any $i\ge2$, provided $\delta\ll\varepsilon$.
This contradicts the choice of $v$.
\end{proof}

\section{Local properties}\label{sec:loc}

In this section we study the local properties of noncritical maps and prove Theorems \ref{thm:open} and \ref{thm:hure}.
As before, we use the notation $\varepsilon$, $\delta$, $c(\varepsilon)$, and $\varkappa(\delta)$.

Throughout this section, $U$ denotes a tiny ball in a GCBA space $X$.
Note that for any $a,x,y\in U$, we have $\tilde\angle axy+\tilde\angle ayx\le\pi$, where $\tilde\angle$ denotes the comparison angle.
In particular if $|ax|\ge|ay|$, we obtain $\angle axy\le\tilde\angle axy\le\pi/2$.
We will often use this observation.

Let $f=(|a_1\cdot|,\dots,|a_k\cdot|)$ be an $(\varepsilon,\delta)$-noncritical map at $p\in U$ in the sense of Definition \ref{dfn:nc} and let $b\in U$ be as in the condition (2).
In the terminology of the previous section, $\{a_i'\}_{i=1}^k$ is an $(\varepsilon,\delta)$-noncritical collection in $\Sigma_p$ with a regular direction $b'$.
We also call $b'$ a \textit{regular direction} of $f$ at $p$ (note that this notation specifies not only the direction $b'$ but also the point $b$).
By the upper semicontinuity of angle and the local geodesic completeness, there exists a neighborhood $V\subset U$ of $p$ such that $f$ is an $(\varepsilon,\delta)$-noncritical map on $V$ with a regular direction $b'$.
We call $V$ a \textit{regular neighborhood} of $f$ at $p$ with respect to $b$.

If we want to estimate the size of a regular neighborhood, we use the following alternative definition.
Let $\rho$ be a positive number less than the radius of $U$.

\begin{dfn}\label{dfn:nc'}
Let $a_i\in U$ ($1\le i\le k$).
We say that $f=(|a_1\cdot|,\dots,|a_k\cdot|)$ is an \textit{$(\varepsilon,\delta,\rho)$-noncritical map} at $p\in U$ if the following hold:
\begin{enumerate}
\item $|a_ip|>\rho$ and
\[\tilde\angle a_ipx+\tilde\angle a_jpx<3\pi/2+\delta\]
for any $x\in B(p,\rho)\setminus\{p\}$ and $1\le i\neq j\le k$;
\item there exists $b\in U$ such that $|bp|>\rho$ and
\[\tilde\angle a_ipx+\tilde\angle bpx<3\pi/2-\varepsilon\]
for any $x\in B(p,\rho)\setminus\{p\}$ and $1\le i\le k$.
\end{enumerate}
\end{dfn}

In this case as well, we call $b'$ a \textit{regular direction} of  $f$ at $p$.

\begin{rem}\label{rem:nc'}
By the local geodesic completeness and the monotonicity of comparison angle, it suffices to consider the case $x\in B(p,\rho)\setminus B(p,\sigma)$ in the above inequalities, where $0<\sigma<\rho$.
\end{rem}

The above definition is equivalent to Definition \ref{dfn:nc} in the following sense:

\begin{lem}\label{lem:nc'1}
A map $f=(|a_1\cdot|,\dots,|a_k\cdot|)$ is $(\varepsilon,\delta)$-noncritical at $p\in U$ if and only if it is $(\varepsilon,\delta,\rho)$-noncritical at $p$ for some $\rho>0$.
\end{lem}

\begin{proof}
The ``if'' part follows from the local geodesic completeness and the monotonicity of angle.
We show the ``only if'' part.
By definition, we have $|a_i'\xi|+|a_j'\xi|<3\pi/2+\delta$ for any $\xi\in\Sigma_p$.
Then the first variation formula implies that $\tilde\angle a_ipx+\tilde\angle a_jpx<3\pi/2+\delta$ for any $x$ sufficiently near $p$.
Hence the first condition of Definition \ref{dfn:nc'} holds for some $\rho>0$.
Similarly, the second condition holds.
\end{proof}

The size of a regular neighborhood can be estimated as follows:

\begin{lem}\label{lem:nc'2}
Let $f=(|a_1\cdot|,\dots,|a_k\cdot|)$ be an $(\varepsilon,\delta,\rho)$-noncritical map at $p$.
Then it is $(\varepsilon/2,\varkappa(\delta),\rho/2)$-noncritical on $B(p,\rho\delta)$.
\end{lem}

\begin{proof}
By definition, we have $\tilde\angle a_ipx+\tilde\angle a_jpx<3\pi/2+\delta$ for any $x\in B(p,\rho)\setminus\{p\}$, where $|a_ip|,|a_jp|>\rho$.
Hence if $q\in B(p,\rho\delta)$ and  $x\in B(q,\rho/2)\setminus B(q,\rho/3)$, we have $\tilde\angle a_iqx+\tilde\angle a_jqx<3\pi/2+\varkappa(\delta)$.
By Remark \ref{rem:nc'}, this implies the first condition of Definition \ref{dfn:nc'} for $q$.
Similarly, one can show the second condition.
\end{proof}

Now we will investigate the local properties of noncritical maps.
First we prove Theorem \ref{thm:open}.

\begin{proof}[Proof of Theorem \ref{thm:open}]
Let $f=(|a_1\cdot|,\dots,|a_k\cdot|)$ be an $(\varepsilon,\delta)$-noncritical map at $p$ with a regular direction $b'$ and let $V$ be a regular neighborhood.
Then $k\le\dim T_p$ by Proposition \ref{prop:dir}(1).
Furthermore $f$ is $c(\varepsilon)$-open on $V$ by Lemma \ref{lem:open}, Remark \ref{rem:ineq}, and Proposition \ref{prop:dir}(2).

Now assume $k=\dim T_p$.
We show $f$ is injective near $p$.
Suppose the contrary and let $x,y\in V$ be distinct points sufficiently close to $p$ such that $f(x)=f(y)$.
In particular we have $\angle a_ixy\le \pi/2$ for any $i$ as observed at the beginning of this section.
We may assume $|bx|\ge|by|$, which implies $\angle bxy\le\pi/2$ as well.
Hence $y'\in\Sigma_x$ satisfies the assumption (1) of Lemma \ref{lem:ind} for a noncritical collection $\{a_i'\}_{i=1}^k$ and its regular direction $b'$ (in particular we may assume $k\ge2$).
Thus $\{(a_i')'\}_{i=1}^k$ in $\Sigma_{y'}$ is a noncritical collection with a regular direction $(b')'$.
Therefore by Proposition \ref{prop:dir}(1) we have $k\le\dim\Sigma_{y'}+1\le\dim\Sigma_x$.
On the other hand, if $x$ is sufficiently close to $p$, we have $\dim T_x\le k$ by the upper semicontinuity of local dimension.
This is a contradiction.
\end{proof}

From now on, we assume $k<\dim T_p$.
We first observe that the fiber of $f$ through $p$ is not a singleton.

\begin{lem}\label{lem:fibr}
Let $f=(|a_1\cdot|,\dots,|a_k\cdot|)$ be an $(\varepsilon,\delta)$-noncritical map at $p\in U$, where $k<\dim T_p$.
Then there exists a point of $f^{-1}(f(p))$ arbitrarily close to $p$.
\end{lem}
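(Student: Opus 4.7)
The plan is to find a direction $\xi\in\Sigma_p$ with $f'(\xi)=0$ --- equivalently, $|a_i'\xi|=\pi/2$ for every $i$ --- and then invoke Lemma \ref{lem:dir} together with the $c(\varepsilon)$-openness of $f$ from Theorem \ref{thm:open} to manufacture points of $f^{-1}(f(p))$ arbitrarily close to $p$. Constructing such a $\xi$ is an intrinsic problem in $\Sigma_p$: given the $(\varepsilon,\delta)$-noncritical collection $\{a_i'\}_{i=1}^k$ in $\Sigma_p$ with regular direction $b'$ and with $k\le\dim\Sigma_p$ (the reformulation of $k<\dim T_p$), produce a point simultaneously at distance $\pi/2$ from every $a_i'$.

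Proposition \ref{prop:dir}(2) already furnishes a point $v\in\Sigma_p$ with $|va_i'|=\pi/2$ for $i\ge 2$, $|va_1'|<\pi/2-c(\varepsilon)$, and $|vb'|>\pi/2+c(\varepsilon)$. To close the final equality $|\cdot a_1'|=\pi/2$, I would run the closest-point construction one extra step inside the compact level set
\[
N:=\bigl\{x\in\Sigma_p:|xa_i'|=\pi/2\ (2\le i\le k),\ |xb'|\ge\pi/2+c(\varepsilon)/2\bigr\},
\]
which contains $v$, by picking $\xi\in N$ that maximizes $x\mapsto|xa_1'|$ over $N$. I would then show $|\xi a_1'|=\pi/2$ by contradiction: if $|\xi a_1'|<\pi/2$, a hybrid application of Lemma \ref{lem:ind} (case (2) applied to $a_i'$ for $i\ge 2$ and to $b'$, supplemented by a direct triangle-comparison argument for $a_1'$, which sits strictly inside the $\pi/2$-sphere around $\xi$) shows that $\{(a_i')'\}_{i=1}^k$ is a $(c(\varepsilon),\varkappa(\delta))$-noncritical collection in $\Sigma_\xi$ with regular direction $(b')'$. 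Since $k\le\dim\Sigma_p$ forces $k-1\le\dim\Sigma_\xi$, Proposition \ref{prop:dir}(2) applied in $\Sigma_\xi$, combined with the antipode construction from local geodesic completeness, yields a tangent direction $w$ at $\xi$ along which $|a_i'\cdot|$ ($i\ge 2$) stays first-order constant, $|b'\cdot|$ increases, and crucially $|a_1'\cdot|$ strictly increases. Lemma \ref{lem:dir} applied in $\Sigma_p$ then produces a competitor in $N$ near $\xi$ with strictly larger $|\cdot a_1'|$, contradicting the choice of $\xi$. Hence $|\xi a_1'|=\pi/2$ and $\xi$ is the desired critical direction.

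The main obstacle is this last step: Proposition \ref{prop:dir}(2)'s natural output is a descent direction for its distinguished coordinate, so producing an ascent direction for $|a_1'\cdot|$ that is simultaneously tangent to the level set $N$ and keeps $|b'\cdot|$ increasing requires both the dimension slack $k-1<\dim\Sigma_\xi$ and a careful antipodal adjustment available in $\Sigma_\xi$ by local geodesic completeness. Verifying that all the relevant $c(\varepsilon)$- and $\varkappa(\delta)$-bounds survive this adjustment, and that the hybrid case of Lemma \ref{lem:ind} applies cleanly at a point where $a_1'$ is on the close side and $b'$ on the far side, is the technically delicate point.
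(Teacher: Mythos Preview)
Your overall plan --- find $\xi\in\Sigma_p$ with $|a_i'\xi|=\pi/2$ for all $i$ and then invoke Lemma \ref{lem:dir} --- is exactly the paper's strategy, and the final step is identical. The gap is in how you manufacture $\xi$.

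Your contradiction at the maximizer $\xi\in N$ needs a direction $w\in\Sigma_\xi$ with $|w(a_i')'|=\pi/2$ for $i\ge 2$, $|w(b')'|>\pi/2$, and crucially $|w(a_1')'|>\pi/2$. Proposition \ref{prop:dir}(2) produces the first two conditions but gives $|w(a_1')'|<\pi/2-c(\varepsilon)$, a \emph{descent} direction for $|a_1'\cdot|$. Your proposed fix --- pass to an antipode of $w$ in $\Sigma_\xi$ --- does not work: for an antipode $\bar w$ one only has $|\bar w(a_i')'|\ge\pi-|w(a_i')'|=\pi/2$ for $i\ge 2$, not equality (antipodes are not unique and $\Sigma_\xi$ is not a sphere), and the $b'$-inequality flips the wrong way. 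The ``dimension slack'' $k-1<\dim\Sigma_\xi$ you invoke is also not available in general: from Proposition \ref{prop:dir}(1) one only gets $k-1\le\dim\Sigma_\xi$, and equality does occur (already when $\Sigma_p$ is the round sphere). In addition, your ``hybrid'' Lemma \ref{lem:ind} at $\xi$ is unsafe because $|a_1'\xi|$ has no a priori lower bound, so the denominators in the comparison estimate can degenerate. Finally, you have not addressed the possibility that the maximum of $|a_1'\cdot|$ on $N$ exceeds $\pi/2$; $N$ need not be connected, so an intermediate-value argument is not immediate.

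The paper sidesteps all of this by using the hypothesis $k<\dim T_p$ \emph{externally}, not inside $\Sigma_p$. Since $f$ cannot be injective near $p$ (else it would be a bi-Lipschitz embedding onto an open set of $\mathbb R^k$, contradicting $k<\dim T_p$), there are $x_j\neq y_j\to p$ with $f(x_j)=f(y_j)$; extending $y_jx_j$ and passing to a limit yields $z$ with $\angle a_ipz\ge\pi/2$ and $\angle bpz\ge\pi/2$. Thus $z'$ lies in
\[
X_0=\{x\in\Sigma_p:\ |xa_i'|\ge\pi/2\ (1\le i\le k),\ |xb'|\ge\pi/2\},
\]
and from there the exact inductive descent of Proposition \ref{prop:dir}(2) (minimize distance to $a_j'$ on $X_{j-1}$) upgrades each $\ge\pi/2$ to $=\pi/2$, giving $\xi\in X_k$ with $f'(\xi)=0$. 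The point is that once you have a seed in $X_0$, only descent directions are needed, and those are precisely what Proposition \ref{prop:dir}(2) supplies.
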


\begin{proof}
Let $b'$ be a regular direction of $f$ at $p$ and $V$ a regular neighborhood.
We may assume that there exist $x_j\neq y_j\in V$ converging to $p$ such that $f(x_j)=f(y_j)$; otherwise $f$ is a bi-Lipschitz open embedding near $p$, which contradicts $k<\dim T_p$.
In particular we have $\angle a_ix_jy_j\le\pi/2$ for any $i$ as before.
We may further assume $|bx_j|\ge|by_j|$ and hence $\angle bx_jy_j\le\pi/2$.

Extend the shortest path $y_jx_j$ beyond $x_j$ to a shortest path $y_jz_j$ of fixed length.
The above estimates imply $\angle a_ix_jz_j\ge\pi/2$ and $\angle bx_jz_j\ge\pi/2$.
We may assume $z_j$ converges to $z\neq p$.
By the upper semicontinuity of angle, we have $\angle a_ipz\ge\pi/2$ and $\angle bpz\ge\pi/2$.

For $0\le j\le k$, set 
\begin{equation*}
X_j:=\left\{x\in\Sigma_p\;\middle|\;
\begin{gathered}
|xa_i'|=\pi/2\ (1\le i\le j),\\
|xa_i'|\ge\pi/2\ (j<i\le k),\\
|xb'|\ge\pi/2
\end{gathered}
\right\}.
\end{equation*}
We have now shown $z'\in X_0$.
Furthermore an inductive argument similar to the proof of Proposition \ref{prop:dir}(2) shows $X_j\neq\emptyset$ for any $j\ge 1$.
In particular for $\xi\in X_k$ we have $f'(\xi)=0$.
Thus the claim follows from Lemma \ref{lem:dir}.
\end{proof}

Next we construct a local neighborhood retraction to the fiber of a noncritical map.
Such a retraction for a strainer map (with much nicer properties) was constructed in \cite[9.1]{LN:geod}.
Our construction is inspired by a similar argument in CBB geometry \cite[6.15]{K:stab}.
Here we use Definition \ref{dfn:nc'} to specify the size of a regular neighborhood (Lemma \ref{lem:nc'2}).

\begin{prop}\label{prop:ret}
Let $f=(|a_1\cdot|,\dots,|a_k\cdot|)$ be $(\varepsilon,\delta,\rho)$-noncritical at $p\in U$.
Then, for any $0<r<\rho\delta$, there exists a continuous map
\[R:B(p,r)\to B(p,Lr)\cap f^{-1}(f(p))\]
that is the identity on $B(p,r)\cap f^{-1}(f(p))$, where $L=c(\varepsilon)^{-1}$.
\end{prop}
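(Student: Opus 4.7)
The ingredient powering the construction is the $c(\varepsilon)$-openness of $f$: by Theorem \ref{thm:open} and Remark \ref{rem:nc'}, this holds uniformly on all of $B(p,\rho\delta)$. My plan is to build $R$ as the uniform limit of iterates of a continuous ``half-step'' map $\sigma:B(p,r)\to B(p,2r)$ which (a) fixes every point of $f^{-1}(f(p))$, (b) satisfies $|x\sigma(x)|\le C|f(x)-f(p)|$, and (c) halves the discrepancy, $|f(\sigma(x))-f(p)|\le\tfrac12|f(x)-f(p)|$. Here $C=c(\varepsilon)^{-1}$. Iterating $\sigma$ then yields a geometric Cauchy sequence whose limit is the desired retraction.

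\textbf{Constructing $\sigma$.} For any fixed $x$, the openness of $f$ produces a point $y$ with $f(y)=\tfrac12(f(x)+f(p))$ and $|xy|\le\tfrac12 C|f(x)-f(p)|$; the delicate step is selecting such a $y$ continuously in $x$. I would proceed by a partition-of-unity patching. Cover the complement of the fiber in $B(p,r)$ by a locally finite collection of small balls $B(x_\alpha,\eta_\alpha)$, with $\eta_\alpha$ chosen so that $|f(\cdot)-f(p)|$ fluctuates on each ball by at most a small fraction of $|f(x_\alpha)-f(p)|$, and on each ball fix a single trial push $y_\alpha$ supplied by openness at $x_\alpha$. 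Blend the $y_\alpha$ into a preliminary map $\tilde\sigma(x)$ via a continuous selection mechanism subordinate to the cover (for instance, choosing at each $x$ the $y_\alpha$ whose partition-of-unity weight $\psi_\alpha(x)$ is maximal, after a standard tie-breaking). Finally, apply openness once more, at $\tilde\sigma(x)$, to correct the value of $f$ from whatever it actually is to the exact target $\tfrac12(f(x)+f(p))$; choosing the $\eta_\alpha$ small enough compared to $|f(x_\alpha)-f(p)|/C$ makes this correction satisfy (b) and (c) with the advertised constants. On the fiber itself, set $\sigma(x)=x$; continuity at fiber points is automatic since $|x\sigma(x)|\le C|f(x)-f(p)|\to 0$ as $f(x)\to f(p)$.

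\textbf{Iteration and main obstacle.} Setting $x_n:=\sigma^n(x)$, properties (b) and (c) give $|x_nx_{n+1}|\le C\cdot 2^{-n}|f(x)-f(p)|$, so $\{x_n\}$ is Cauchy with limit $R(x)$ satisfying $f(R(x))=f(p)$ and $|xR(x)|\le 2C|f(x)-f(p)|$. Since $f$ is $\sqrt{k}$-Lipschitz and $|xp|<r$, we have $|pR(x)|\le(1+2C\sqrt{k})r$, which lies in $B(p,Lr)$ for a suitable $L=c(\varepsilon)^{-1}$. Continuity of each $\sigma^n$ together with the uniform geometric bound gives continuity of $R$, and $R$ is the identity on $B(p,r)\cap f^{-1}(f(p))$ by construction. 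The main obstacle is precisely the continuity of $\sigma$: the $c(\varepsilon)$-openness is only a pointwise existence statement, and, in the absence of a canonical flow such as Perelman's quasigeodesic flow in the CBB setting, one has to manufacture the continuity by hand via the patching argument above, in the spirit of \cite[6.15]{K:stab}.
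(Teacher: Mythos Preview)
Your iteration scheme is fine once you have a continuous half-step $\sigma$, but the construction of $\sigma$ has a genuine gap. Selecting ``the $y_\alpha$ whose partition-of-unity weight is maximal, after a standard tie-breaking'' is \emph{not} continuous: at points where two weights coincide the choice jumps, and no tie-breaking rule repairs this. Your correction step (``apply openness once more at $\tilde\sigma(x)$ to hit the exact target'') is the same selection problem over again --- openness gives existence, not a continuous choice. One could try to blend the $y_\alpha$ by weighted barycenters (which do exist continuously in a tiny CAT($\kappa$) ball), but convexity of the $f_i$ only gives an upper bound on $f_i$ at the barycenter, not control from below, so you lose the contraction estimate (c). In short, the ``manufacture continuity by hand'' step is exactly where the difficulty lives, and the proposal does not resolve it.

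The paper avoids the selection problem by using two maps that are canonically continuous in a CAT setting. First, $R_1$ pushes $x$ along the unique shortest path toward the regular point $b$; since $|a_i'b'|>\pi/2+c(\varepsilon)$, every $f_i$ increases at a definite rate, and one stops on entering the set $\Pi_+^s=\{f_i\ge s f_j\ \forall i\neq j\}$ (transversality of the stopping condition gives continuity). Second, $R_2$ is the nearest-point projection onto the \emph{convex} set $\Pi_-=\{f_i\le 0\ \forall i\}$, which is single-valued and $1$-Lipschitz in a CAT space. The substantive work is then to show $R_2(\Pi_+^s)\subset\Pi_+^s\cap\Pi_-=f^{-1}(f(p))$; this uses condition~(1) of Definition~\ref{dfn:nc'} and the $c(\varepsilon)$-openness of $f$. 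So rather than iterating an approximate step, the paper makes two explicit geometric moves --- one along a geodesic, one by convex projection --- that are automatically continuous.
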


\begin{proof}
Let $b'$ be a regular direction of $f$ at $p$.
To simplify the notation, we set $f_i=|a_i\cdot|-|a_ip|$ for any $i$.
Let $s=c(\varepsilon)$, which will be determined later as well as $L$.
Define
\begin{align*}
&\Pi_+^s:=\{x\in U\mid f_i(x)\ge sf_j(x)\text{ for any }i\neq j\},\\
&\Pi_-:=\{x\in U\mid f_i(x)\le0\text{ for any }i\},\\
&\Pi:=\Pi_+^s\cap\Pi_-=f^{-1}(f(p))\cap U.
\end{align*}
Note that $\Pi_-$ is convex since $f_i$ is convex.
If $k=1$, we consider $\Pi_+:=\{x\in U\mid f_1(x)\ge0\}$ instead of $\Pi_+^s$.

First we construct a map $R_1:B(p,r)\to B(p,Lr)\cap\Pi_+^s$ as follows:
for any $x\in B(p,r)$, let $R_1(x)$ be the closest point to $x$ on the intersection of the shortest path $xb$ and $\Pi_+^s$.
Let us check this definition works.
By Lemmas \ref{lem:nc'1} and \ref{lem:nc'2}, $f$ is a $(c(\varepsilon),\varkappa(\delta))$-noncritical map with a regular direction $b'$ on $B(p,c(\varepsilon)^{-1}r)$.
In particular $|a_i'b'|>\pi/2+c(\varepsilon)$ on $B(p,c(\varepsilon)^{-1}r)$ for any $i$ by Remark \ref{rem:ineq}.
This means if one moves $x$ toward $b$ along the shortest path, the value of $f_i$ increases with velocity $>c(\varepsilon)$.
We show it reaches $\Pi_+^s$ within time $2r/c(\varepsilon)$ (the case $k=1$ is obvious).
Let $\gamma(t)$ be the shortest path $xb$ with arclength parameter $t$.
Then $-r+c(\varepsilon)t\le f_i(\gamma(t))\le r+t$.
In particular if $t=2r/c(\varepsilon)$, we have
\[\frac{f_j(\gamma(t))}{f_i(\gamma(t))}\le\frac{r+2rc(\varepsilon)^{-1}}r=1+2c(\varepsilon)^{-1}.\]
Hence $R_1$ can be defined for $s=(1+2c(\varepsilon)^{-1})^{-1}$ and $L=2c(\varepsilon)^{-1}+1$.

Furthermore any $y\in B(p,Lr)$ on the shortest path $xb$ beyond $R_1(x)$ lies in the interior of $\Pi_+^s$ (the case $k=1$ is obvious).
Indeed,
\begin{align*}
f_i(y)&>f_i(R_1(x))+c(\varepsilon)|R_1(x)y|\\
&\ge sf_j(R_1(x))+c(\varepsilon)|R_1(x)y|\\
&\ge sf_j(y)+(c(\varepsilon)-s)|R_1(x)y|>sf_j(y),
\end{align*}
where we take $s$ smaller if necessary.
Together with the uniqueness of shortest paths, this shows that $R_1$ is continuous.
Clearly $R_1$ is the identity on $\Pi_+^s$.

Next we construct a map $R_2:B(p,Lr)\cap \Pi_+^s\to\Pi_-$ as follows:
for any $x\in B(p,Lr)\cap \Pi_+^s$, let $R_2(x)$ be the closest point to $x$ on $\Pi_-$.
Since $\Pi_-$ is convex, triangle comparison shows that the closest point is uniquely determined.

The map $R_2$ does not increase the distance to $p$.
In particular $R_2(x)\in B(p,Lr)$.
Let $y=R_2(x)$.
Since $y$ is closest to $x$ on the convex set $\Pi_-$, the first variation formula implies $\angle pyx\ge\pi/2$.
Thus we have $\tilde\angle pyx\ge\pi/2$ and $\tilde\angle pxy\le\pi/2$, which mean $|py|\le|px|$.

We show that $R_2(x)\in\Pi$ (the case $k=1$ is obvious).
Suppose $y=R_2(x)\notin\Pi$.
We may assume $f_i(y)<0$ for some $i$.
We first observe that $\tilde\angle a_ixy<\pi/2-c(\varepsilon)$.
By the $c(\varepsilon)$-openness of $f$, there exists $z\in\Pi$ such that $c(\varepsilon)|xz|\le|f(x)|$, where $f(p)=0$.
Since $x\in\Pi_+^s$, we have $|f(x)|\le c(\varepsilon)^{-1}f_i(x)$.
Thus
\[f_i(x)\ge c(\varepsilon)|xz|\ge f_i(y)+c(\varepsilon)|xy|,\]
where the second inequality follows from the definition of $y$.
This implies $\tilde\angle a_ixy<\pi/2-c(\varepsilon)$.

Therefore $\angle a_ixy<\pi/2-c(\varepsilon)$.
Extend the shortest path $a_ix$ beyond $x$ to a shortest path $a_i\bar a_i$ with $|\bar a_ix|=\rho/2$.
Then we have $\tilde\angle\bar a_ixy\ge\angle\bar a_ixy>\pi/2+c(\varepsilon)$ and hence $\angle\bar a_iyx\le\tilde\angle\bar a_iyx<\pi/2-c(\varepsilon)$.
This means that moving $y$ toward $\bar a_i$ decreases the distance to $x$ with velocity at least $c(\varepsilon)$.

On the other hand, since $|xy|\ll\rho$ we have $\tilde\angle a_iy\bar a_i>\pi-\varkappa(\delta)$.
By Definition \ref{dfn:nc'}(1) we have $\angle a_jy\bar a_i\le\tilde\angle a_jy\bar a_i<\pi/2+\varkappa(\delta)$ for any $j\neq i$.
This means that moving $y$ toward $\bar a_i$ increases the value of $f_j$ with velocity at most $\varkappa(\delta)$.

Let $y_1$ be a point on the shortest path $y\bar a_i$ sufficiently close to $y$.
Then the assumption on $y$ and the above observations yield
\[f_i(y_1)<0,\quad f_j(y_1)<\varkappa(\delta)|yy_1|,\quad |xy_1|<|xy|-c(\varepsilon)|yy_1|\]
for any $j\neq i$.
By the $c(\varepsilon)$-openness of $f$, we can find $y_2\in\Pi_-$ such that $c(\varepsilon)|y_2y_1|\le\varkappa(\delta)|yy_1|$.
Since $\varkappa(\delta)\ll c(\varepsilon)$, we have $|xy_2|\le|xy_1|+|y_1y_2|<|xy|$.
This is a contradiction to the choice of $y$.
Therefore $y=R_2(x)\in\Pi$.

The desired retraction $R$ is now obtained as the composition of $R_1$ and $R_2$.
\end{proof}

\begin{rem}
One can also construct $R_2$ by using the gradient flows of semiconcave functions as in the CBB case \cite[6.15]{K:stab}.
The existence and uniqueness of such flows on GCBA spaces were shown in \cite{L:open} (cf.\ \cite{Pet:sc}).

We briefly recall the definition of the gradient (see \cite{L:open} or \cite{Pet:sc} for more details).
Let $f$ be a semiconcave function defined on an open subset $U$ of a GCBA space.
Note that the directional derivative $f'$ is well-defined in any direction and extended to the tangent cone by positive homogeneity.
The \textit{gradient} $\nabla_p f\in T_p$ of $f$ at $p\in U$ is characterized by the following two properties:
\begin{enumerate}
\item $f'(v)\le\langle\nabla_pf,v\rangle$ for any $v\in T_p$;
\item $f'(\nabla_pf)=|\nabla_pf|^2$.
\end{enumerate}
Here the absolute value denotes the distance from the vertex $o$ of $T_p$ and the scalar product is defined by $\langle u,v\rangle:=|u||v|\cos\angle(u,v)$ for $u,v\in T_p$.
More specifically, if $\max f'|_{\Sigma_p}>0$ then $\nabla_pf=f'(\xi_{\max})\xi_{\max}$, where $\xi_{\max}\in\Sigma_p$ is the unique maximum point of $f'|_{\Sigma_p}$; otherwise $\nabla_pf=o$.
The gradient flow of $f$ is the flow along the gradient of $f$.

The alternative construction of $R_2$ is as follows.
Set a concave function $F:=\min\{-f_i,0\}$ and consider the gradient flow $\Phi_t$ of $F$ on $B(p,c(\varepsilon)^{-1}r)$, where $t\ge0$.
We will define $R_2:=\Phi_T$ for some sufficiently large $T>0$.

Clearly $\Phi_t$ fixes $\Pi_-$.
Suppose $x\notin\Pi_-$.
Let $I(x)$ be the set of indices $1\le i\le k$ such that $F(x)=-f_i(x)$.
Then it follows from the noncriticality of $f$ that
\begin{enumerate}
\item if $i\in I(x)$ then $f_i'(\nabla_xF)<-c(\varepsilon)$;
\item if $i\notin I(x)$ then $f_i'(\nabla_xF)>-\varkappa(\delta)$.
\end{enumerate}
Indeed, let $\bar b'$ (resp.\ $\bar a_i'$) be an antipode of $b'$ (resp.\ $a_i'$) in $\Sigma_x$.
By the noncriticality of $f$ we have $f_i'(\bar b')<-c(\varepsilon)$ for any $i$.
Fix $1\le i\le k$.
If $i\in I(x)$, this implies $f_i'(\nabla_xF)<-c(\varepsilon)$.
If $i\notin I(x)$, then we have
\[f_i'(\nabla_xF)=-\langle a_i',\nabla_xF\rangle\ge\langle\bar a_i',\nabla_xF\rangle\ge F_x'(\bar a_i')>-\varkappa(\delta),\]
where the first inequality follows from $|a_i'\bar a_i'|=\pi$, the second one follows from the property of the gradient, and the last one follows from the noncriticality of $f$.

We show that $\Phi_t$ pushes $B(p,Lr)\cap \Pi_+^s$ to $\Pi$ inside $\Pi_+^s$ within time $T=Lr/c(\varepsilon)$.
Let $x\in B(p,Lr)\cap \Pi_+^s$.
It suffices to show that $y=\Phi_t(x)$ lies in $\Pi_+^s$ for sufficiently small $t>0$.
We may assume $x$ is on the boundary of $\Pi_+^s$, that is, $f_i(x)=sf_j(x)$ for some $i\neq j$.
This implies $i\notin I(x)$ and $j\in I(x)$.
By the observation above, we have
\begin{align*}
f_i(y)&>f_i(x)-\varkappa(\delta)t\\
&=sf_j(x)-\varkappa(\delta)t\\
&>s(f_j(y)+c(\varepsilon)t)-\varkappa(\delta)t>sf_j(y),
\end{align*}
which means $y\in\Pi_+^s$.

Hence we can define $R_2:=\Phi_T$ on $B(p,Lr)\cap \Pi_+^s$ (the image of $R_2$ is contained in $B(p,Lr+Lr/c(\varepsilon))$, so we need to replace the constant $L$ with $L+L/c(\varepsilon)$ in the statement of Proposition \ref{prop:ret}).
\end{rem}

Since any metric ball contained in a tiny ball is contractible, we have:

\begin{cor}[cf.\ {\cite[1.11]{LN:geod}}]\label{cor:ret}
Let $f:U\to\mathbb R^k$ be $(\varepsilon,\delta,\rho)$-noncritical at $p\in U$.
Then for any $0<r<\rho\delta$, $B(p,r)\cap f^{-1}(f(p))$ is contractible inside $B(p,Lr)\cap f^{-1}(f(p))$, where $L=c(\varepsilon)^{-1}$.
In particular, $f$ has locally uniformly contractible fiber on $B(p,r)$ (see Definition \ref{dfn:lucf}).
\end{cor}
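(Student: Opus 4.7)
The plan is to deduce both assertions from Proposition \ref{prop:ret} together with the fact, noted just before the corollary, that any metric ball contained in a tiny ball is contractible via the radial contraction along shortest paths from its center.

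For the contractibility of $B(p,r)\cap f^{-1}(f(p))$ inside $B(p,Lr)\cap f^{-1}(f(p))$, I would let $R$ be the retraction supplied by Proposition \ref{prop:ret} and define the canonical contraction $H:B(p,r)\times[0,1]\to B(p,r)$ by sending $(x,t)$ to the point at fraction $1-t$ along the unique shortest path from $p$ to $x$. Since $B(p,r)$ lies in the tiny ball $U$, shortest paths vary continuously in their endpoints and stay inside $B(p,r)$, so $H$ is continuous, satisfies $H(\cdot,0)=\mathrm{id}$ and $H(\cdot,1)\equiv p$. Setting
\[K(x,t):=R(H(x,t))\]
for $(x,t)\in (B(p,r)\cap f^{-1}(f(p)))\times[0,1]$ then gives a continuous map into $B(p,Lr)\cap f^{-1}(f(p))$. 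Because every $x$ in the source already lies in the fiber, the retraction property yields $K(x,0)=R(x)=x$; and $K(x,1)=R(p)=p$ since $p$ itself lies in the fiber. This is precisely the required contraction.

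For the local uniform contractibility of the fibers of $f$ on $B(p,r)$, I would apply the very same argument with a moving base point. By Remark \ref{rem:nc'}, $f$ is $(\varepsilon/2,\varkappa(\delta),\rho/2)$-noncritical at every point of $B(p,\rho\delta)$, so Proposition \ref{prop:ret} is available with uniform constants at each such base point. Given $q\in B(p,r)$ and a neighborhood $U'$ of $q$, I would choose $r_0>0$ small enough that $B(q,(2L+1)r_0)\subset U'$ and $2r_0<(\rho/2)\varkappa(\delta)$, and set $V':=B(q,r_0)$. For any fiber $\Pi$ of $f$ meeting $V'$, I pick $q'\in\Pi\cap V'$; then $\Pi\cap V'\subset B(q',2r_0)\cap\Pi$, and applying Proposition \ref{prop:ret} at $q'$ with radius $2r_0$ and repeating the contraction argument above contracts $B(q',2r_0)\cap\Pi$ inside $B(q',2Lr_0)\cap\Pi\subset\Pi\cap U'$, which restricts to a contraction of $\Pi\cap V'$ inside $\Pi\cap U'$.

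I do not foresee any substantive obstacle beyond bookkeeping with constants: one must check that the radius $r_0$ can be chosen so that the enlarged ball $B(q',2Lr_0)$ fits inside the prescribed neighborhood $U'$ uniformly in $q'\in V'$, and that Proposition \ref{prop:ret} is applicable at each such $q'$. Both are guaranteed by the uniform noncriticality furnished by Remark \ref{rem:nc'}.
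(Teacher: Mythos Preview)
Your argument is correct and follows the same line the paper has in mind: the paper simply notes that metric balls in a tiny ball are contractible and declares the corollary immediate from Proposition~\ref{prop:ret}, whereas you have spelled out the composition $R\circ H$ and the moving-base-point argument for the ``in particular'' clause. The only caveat is purely cosmetic: the constant $L$ you obtain at the shifted base point $q'$ is $c(\varepsilon/2)^{-1}$ rather than $c(\varepsilon)^{-1}$, but as you note this is absorbed in the bookkeeping.
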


The following theorem is a direct consequence of Theorem \ref{thm:glo}, Theorem \ref{thm:open}, and Corollary \ref{cor:ret}.

\begin{thm}\label{thm:hure'}
Let $f:U\to\mathbb R^k$ be a distance map defined on a tiny ball $U$ that is noncritical on an open subset $V\subset U$.
Let $K\subset f(V)$ be a compact set such that $f^{-1}(K)\cap V$ is compact.
Then $f:f^{-1}(K)\cap V\to K$ is a Hurewicz fibration.
\end{thm}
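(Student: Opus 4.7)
The plan is to apply Ungar's criterion, Theorem \ref{thm:glo}, directly to the restricted map $f:f^{-1}(K)\cap V\to K$. The task is therefore to verify its four hypotheses: that source and target are finite-dimensional compact metric spaces (with $K$ an ANR in the intended applications), that the restricted map is open and surjective, and that it has locally uniformly contractible fibers in the sense of Definition \ref{dfn:lucf}.

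Three of these are essentially immediate. Compactness is assumed by hypothesis; surjectivity follows from $K\subset f(V)$; and finite-dimensionality is clear for $K\subset\mathbb R^k$ and holds for the source because $\dim T_q$ is finite and upper semicontinuous on any GCBA space, hence bounded on the compact set $f^{-1}(K)\cap V$. For openness I would invoke Theorem \ref{thm:open}, which gives that $f$ is $c(\varepsilon)$-open near each point of $V$. This uniform quantitative openness immediately yields that the restriction is open in the usual topological sense: for $x\in f^{-1}(K)\cap V$ and small $r>0$, every $y\in B(f(x),c(\varepsilon)r)\cap K$ has a preimage in $B(x,r)\subset V$, which then automatically lies in $f^{-1}(K)\cap V$.

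The remaining hypothesis, locally uniform contractibility of fibers, is supplied by Corollary \ref{cor:ret}, whose conclusion is precisely that $f$ has locally uniformly contractible fibers on a small ball around each point of $V$. This property restricts to the subspace $f^{-1}(K)\cap V$ without trouble: the fiberwise retractions constructed in Proposition \ref{prop:ret} stay within $f^{-1}(f(x))$ by construction, and hence automatically inside $f^{-1}(K)$ once we start from a point whose image lies in $K$. With all four hypotheses in place, Theorem \ref{thm:glo} delivers the Hurewicz fibration conclusion. The only aspect demanding any attention is confirming that $K$ meets the ANR requirement of Ungar's theorem, which is the case for the compact sets arising in the intended applications (e.g.\ small closed boxes in $\mathbb R^k$); beyond this, the argument is a matter of bookkeeping and presents no substantive obstacle.
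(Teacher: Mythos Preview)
Your proposal is correct and follows essentially the same approach as the paper, which simply states that the theorem is a direct consequence of Theorem~\ref{thm:glo}, Theorem~\ref{thm:open}, and Corollary~\ref{cor:ret}. Your careful verification of each hypothesis of Ungar's theorem---including the observation that the ANR condition on $K$ is not stated but is satisfied in the intended applications---is a faithful and slightly more detailed unpacking of what the paper leaves implicit.
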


To prove Theorem \ref{thm:hure}, we need to control the boundary of a ball in the fiber.

\begin{lem}\label{lem:comp}
Let $f=(|a_1\cdot|,\dots,|a_k\cdot|)$ be an $(\varepsilon,\delta)$-noncritical map at $p\in U$, where $k<\dim T_p$.
Then for any sufficiently small $r>0$, the map $(f,|p\cdot|)$ is $(c(\varepsilon),\varkappa(\delta))$-noncritical on $\partial B(p,r)\cap f^{-1}(f(p))$.
\end{lem}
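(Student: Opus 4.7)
The plan is to verify both conditions of Definition \ref{dfn:nc} for the enlarged collection $\{a_1'_q,\ldots,a_k'_q,p'_q\}$ in $\Sigma_q$ at each $q\in\partial B(p,r)\cap f^{-1}(f(p))$ for sufficiently small $r$. The antipodal distance conditions among the pairs $\{a_i'_q,a_j'_q\}$ are inherited from the noncriticality of $f$ at $p$ by upper semicontinuity (as $q\to p$ when $r\to 0$). The genuinely new ingredients are (a) the antipodal distance involving the extra direction $p'_q$, and (b) the existence of a regular direction for the enlarged collection.

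For the single-angle bound $|a_i'_q p'_q|\le\pi/2$, the plan is to use the fiber condition $|a_iq|=|a_ip|$: the convex function $t\mapsto|a_i\gamma(t)|$ along the shortest path $\gamma$ from $p$ to $q$ has equal endpoint values, so its left derivative at $t=r$, which equals $\cos|a_i'_q p'_q|$ by the first variation formula, is at least the average slope $0$. To upgrade this to the antipodal bound $\overline{|a_i'_q p'_q|}<\pi/2+\varkappa(\delta)$, for each antipode $\bar a_i'_q$ I would fix an extension $\bar a_i$ of $a_iq$ beyond $q$ at a controlled distance $s$ (given by local geodesic completeness in $U$) and apply the CAT$(\kappa)$ comparison to the triangle $(a_i,\bar a_i,p)$ with $q$ on the side $a_i\bar a_i$. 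Together with the fiber condition this produces the Euclidean-type lower bound $|\bar a_ip|^2\ge s^2+r^2(1+s/|a_ip|)$, and the $\delta$-slack in the hypothesis $\overline{|a_i'_p a_j'_p|}<\pi/2+\delta$ should translate (via uniform control over antipodes of $a_i'_p$ at $p$) into the required uniform upper bound on $|\bar a_ip|$, hence on the angle $\angle\bar a_iqp$, once $r$ is taken small enough relative to $\varkappa(\delta)$.

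For condition (2) the original regular direction $b$ alone is insufficient, since a direct CAT comparison in the near-isoceles triangle $(b,p,q)$ shows $|p'_qb'_q|<\pi/2$. I would instead take $b''$ near an extension $\bar p_q$ of $pq$ beyond $q$ at a fixed small distance (by local geodesic completeness), perturbed slightly toward $b$, so that $b''_q$ simultaneously lies in the anti-hemispheres of all $a_i'_q$ (inherited from $b'_q$ with margin $c(\varepsilon)$) and of $p'_q$ (since $(\bar p_q)'_q$ is antipodal to $p'_q$). The estimate $|a_i'_q p'_q|\le\pi/2$ from step (a) guarantees that these two open regions in $\Sigma_q$ overlap with uniform margin, so such a $b''$ exists; the antipodal versions of the inequalities for $b''$ then follow by upper semicontinuity of antipodal distance from the analogous bounds at $p$.

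The main obstacle is the uniform upper bound on $\overline{|a_i'_q p'_q|}$ in step (a): in a GCBA space shortest paths may branch, so there can be many antipodes $\bar a_i'_q$, each corresponding to a different extension $\bar a_i$, and the CAT inequality yields only one-sided (lower) bounds on $|\bar a_ip|$ via the comparison triangle $(a_i,\bar a_i,p)$. Controlling $\angle\bar a_iqp$ uniformly therefore requires careful use of the $\delta$-slack in the hypothesis, which is precisely what encodes the permissible branching of antipodes of $a_i'_p$ at $p$. As the paper's preamble notes, this lemma is the only place where $\delta$ (as opposed to just $\varepsilon$) plays an essential role.
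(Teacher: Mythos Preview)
Your plan has a genuine gap in step (a), and your diagnosis of where the $\delta$-slack enters is incorrect.

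In step (a) you correctly observe that the CAT comparison for the triangle $(a_i,\bar a_i,p)$ with $q$ on the side $a_i\bar a_i$ yields only a \emph{lower} bound on $|\bar a_ip|$, whereas bounding $\angle\bar a_iqp$ from above requires an \emph{upper} bound. Your proposed remedy---extracting the upper bound from the hypothesis $\overline{|a_i'_pa_j'_p|}<\pi/2+\delta$---does not work: that hypothesis constrains antipodes of $a_i'$ only relative to the directions $a_j'$ for $j\neq i$, and says nothing about their position relative to $p$ (which is not among the $a_j$). There is simply no upper bound on $|\bar a_ip|$ available beyond the trivial $r+s$, which is far too weak. The paper sidesteps this entirely by taking antipodes of $p'_q$ rather than of $a_i'_q$: if $\bar p$ lies on the extension of $pq$ beyond $q$ at fixed distance, then as $q\to p$ the point $\bar p$ subconverges to some $\bar p_\infty$ whose direction at $p$ is a limit of the directions $q'_p$; the fiber condition $|a_iq|=|a_ip|$ gives $\angle a_ipq\le\pi/2$, hence $\angle a_ip\bar p_\infty\le\pi/2$, and upper semicontinuity of angle then forces $|a_i'_q\bar p'_q|<\pi/2+\delta$ for small $r$. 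The asymmetry between $p$ (close to $q$) and $a_i$ (far from $q$) is precisely what makes antipodes of $p'_q$ tractable and antipodes of $a_i'_q$ not.

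For step (b), interpolating along the geodesic in $\Sigma_q$ between $\bar p'_q$ and the regular direction $\xi$ for $f$ is indeed what the paper does, but your ``overlap of regions'' justification is insufficient and your final appeal to upper semicontinuity from $p$ makes no sense (the candidate direction is constructed at $q$, not inherited from $p$). The missing ingredient is that $p$ is a $(1,\delta)$-strainer at $q$ for small $r$, so $\overline{|p'_q\bar p'_q|}<2\delta$; combined with $|\bar a_i'_qp'_q|\ge\pi/2$ (from the fiber condition via $|a_i'_qp'_q|\le\pi/2$) this gives $|\bar a_i'_q\bar p'_q|<\pi/2+2\delta$ for \emph{every} antipode $\bar a_i'_q$. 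This estimate is what guarantees, via triangle comparison in $\Sigma_q$, that sliding $\xi$ toward $\bar p'_q$ does not increase the distance to any $\bar a_i'_q$ until one is already within $\pi/2+c(\varepsilon)$ of $\bar p'_q$, so both antipodal bounds can be achieved simultaneously.
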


\begin{rem}\label{rem:comp}
By Lemma \ref{lem:fibr} and Corollary \ref{cor:ret}, $\partial B(p,r)\cap f^{-1}(f(p))\neq\emptyset$ for any sufficiently small $r$.
\end{rem}

\begin{proof}
Let $x\in\partial B(p,r)\cap f^{-1}(f(p))$.
In particular $\angle a_ipx\le\pi/2$.
Let $\bar p'$ be an arbitrary antipode of $p'$ in $\Sigma_x$.
By the upper semicontinuity of angle and the local geodesic completeness, we have $|a_i'\bar p'|<\pi/2+\delta$ if $r$ is small enough.
In other words $\overline{|a_i'p'|}<\pi/2+\delta$.

It remains to show that there exists $\eta\in\Sigma_x$ such that
\[\overline{|a_i'\eta|}<\pi/2-c(\varepsilon),\quad\overline{|p'\eta|}<\pi/2-c(\varepsilon).\]
We may assume that $f$ is $(\varepsilon,\delta)$-noncritical on $B(p,r)$.
In particular there exists $\xi\in\Sigma_x$ such that $\overline{|a_i'\xi|}<\pi/2-\varepsilon$.
We move $\xi$ toward $\bar p'$ to get $\eta$ as above.

Fix an antipode $\bar p'$ of $p'$ in $\Sigma_x$.
Recall that $p$ is a $(1,\delta)$-strainer at $x$ if $r$ is small enough (see \cite[7.3]{LN:geod}).
In particular $\overline{|p'\bar p'|}<2\delta$ (see \cite[6.3]{LN:geod}).
Let $\bar a_i'$ be an arbitrary antipode of $a_i'$ in $\Sigma_x$.
Since $x\in f^{-1}(f(p))$, we have $|a_i'p'|\le\pi/2$ and thus $|\bar a_i'p'|\ge\pi/2$.
Together with $\overline{|p'\bar p'|}<2\delta$, this implies $|\bar a_i'\bar p'|<\pi/2+2\delta$ (conversely $|\bar a_i'\bar p'|\ge\pi/2-\delta$ since $|a_i'\bar p'|<\pi/2+\delta$).

Now we move $\xi$ toward $\bar p'$ in $\Sigma_x$.
By triangle comparison, we have
\[\cos\angle\bar p'\xi\bar a_i'\ge\frac{\cos|\bar p'\bar a_i'|-\cos|\xi\bar p'|\cos|\xi\bar a_i'|}{\sin|\xi\bar p'|\sin|\xi\bar a_i'|}.\]
Since $|\xi\bar a_i'|<\pi/2-\varepsilon$ and $|\bar p'\bar a_i'|<\pi/2+2\delta$, we have $\angle\bar p'\xi\bar a_i'<\pi/2$ as long as $|\xi\bar p'|>\pi/2+c(\varepsilon)$.
Hence we can find $\eta_1\in\Sigma_x$ such that $|\eta_1\bar a_i'|\le|\xi\bar a_i'|<\pi/2-\varepsilon$ and $|\eta_1\bar p'|\le\pi/2+c(\varepsilon)$ (if $|\xi\bar p'|\le\pi/2+c(\varepsilon)$, we can take $\eta_1:=\xi$).
Moving $\eta_1$ further toward $\bar p'$ by distance $2c(\varepsilon)$ if necessary, we obtain $\eta\in\Sigma_x$ such that $|\eta\bar p'|\le\pi/2-c(\varepsilon)$ and $|\eta\bar a_i'|<\pi/2-c(\varepsilon)$.
Since $\overline{|p'\bar p'|}<2\delta$, this completes the proof.
\end{proof}

We are now in a position to prove Theorem \ref{thm:hure}.
For $0<r_1<r_2$, we set
\[A_p[r_1,r_2]:=\bar B(p,r_2)\setminus B(p,r_1),\quad A_p[r_1,r_2):=B(p,r_2)\setminus B(p,r_1).\]

\begin{proof}[Proof of Theorem \ref{thm:hure}]

Let $L$ be the constant of Corollary \ref{cor:ret}.
By Lemma \ref{lem:comp}, $(f,|p\cdot|)$ is noncritical on $A_p[r/2,Lr]\cap f^{-1}(f(p))$ for sufficiently small $r>0$.
Let $B$ be a sufficiently small closed ball around $f(p)$.
By the openness of $(f,|p\cdot|)$ (Theorem \ref{thm:open}) and Remark \ref{rem:comp}, $B\times[r/2,Lr]$ is contained in the image of $(f,|p\cdot|)$.
Therefore by Theorem \ref{thm:hure'}, $(f,|p\cdot|)$ is a Hurewicz fibration over $B\times[r/2,Lr]$.

Let $\mathring B$ denote the interior of $B$.
We will apply Theorem \ref{thm:loc} to the map $f:f^{-1}(\mathring B)\cap B(p,Lr)\to\mathring B$.
It suffices to show that any fiber of $f$ is contractible.
Set $g=|p\cdot|$.
By the previous paragraph, $g:f^{-1}(b)\cap A_p[r/2,Lr]\to[r/2,Lr]$ is a Hurewicz fibration for any $b\in\mathring B$.
Let $h_t$ $(0\le t\le 1)$ be a homotopy crushing $[r/2,Lr)$ to $\{r/2\}$ linearly.
Let $H_t:f^{-1}(b)\cap A_p[r/2,Lr)\to f^{-1}(b)\cap A_p[r/2,Lr)$ be a lift of $h_t\circ g$, where the lift of $h_0\circ g$ is given by the identity.
We define
\begin{equation*}
\tilde H_t:=
\begin{cases}
\hfil H_t & \text{on}\quad A_p[r,Lr)\cap f^{-1}(b)\\
H_{(2|p\cdot|/r-1)t} & \text{on}\quad A_p[r/2,r)\cap f^{-1}(b)\\
\hfil\mathrm{id} & \text{on}\quad B(p,r/2)\cap f^{-1}(b).
\end{cases}
\end{equation*}
Then $\tilde H_t$ pushes $f^{-1}(b)\cap B(p,Lr)$ into $f^{-1}(b)\cap B(p,r)$.
Composing this with the retraction of Corollary \ref{cor:ret}, we obtain a homotopy crushing $f^{-1}(b)\cap B(p,Lr)$ to $p$ inside $f^{-1}(b)\cap B(p,Lr)$.
This completes the proof.
\end{proof}

Finally we prove Corollary \ref{cor:sph}.
The proof is an easy application of the second half of Theorem \ref{thm:open}.
Strictly speaking we need its global version for CAT($0$) spaces, which easily can be verified.

\begin{rem}\label{rem:bilip}
Under the assumption of Corollary \ref{cor:sph}, the diameter of $\Sigma$ (with respect to the original metric) is less than $2\pi$; in particular the original metric is uniformly bi-Lipschitz equivalent to the $\pi$-truncation.
This is observed as follows.
Let $\{\xi_i\}_{i=1}^{n+1}$ and $\eta$ be as in the assumption.
Then Lemma \ref{lem:ind}(2) and Proposition \ref{prop:dir}(1) imply that they are $\pi/2$-dense in $\Sigma$, whereas Remark \ref{rem:ineq} shows that they are $\pi$-close to each other.
\end{rem}

\begin{proof}[Proof of Corollary \ref{cor:sph}]
Consider the Euclidean cone $K$ over $\Sigma$ and let $\gamma_i$ be the ray starting at the vertex $o$ in the direction $\xi_i$.
Let $f_i:K\to\mathbb R$ be the Busemann function with respect to $\gamma_i$:
\[f_i(v)=\lim_{t\to\infty}|\gamma_i(t),v|-t=-|v|\cos\angle(\xi_i,v),\]
where the absolute value denotes the distance from $o$.
Set $f:=(f_1,\dots,f_{n+1}):K\to\mathbb R^{n+1}$.
Observe that $f$ is a (normalized) limit of $(\varepsilon,\delta)$-noncritical maps to $\mathbb R^{n+1}$ defined on arbitrarily large neighborhoods of $o$, which are $c(\varepsilon)$-open embeddings by the global version of Theorem \ref{thm:open} for CAT($0$) spaces.
Therefore $f$ is a bi-Lipschitz homeomorphism.
Identify $\Sigma$ with the unit sphere in $K$ centered at $o$ and $\mathbb S^n$ with the unit sphere in $\mathbb R^{n+1}$ centered at $0$.
We may use the extrinsic metrics of $\Sigma$ and $\mathbb S^n$, that is, the restrictions of the metrics of $K$ and $\mathbb R^{n+1}$, respectively.
Define $\tilde f:\Sigma\to\mathbb S^n$ by $\tilde f(x):=f(x)/|f(x)|$.

Let us prove that $\tilde f$ is a bi-Lipschitz homeomorphism.
Note that $f$ maps a ray emanating from $o$ to a ray emanating from $0$.
Since $f$ is surjective, this implies that $\tilde f$ is also surjective.
The Lipschitz continuity of $\tilde f$ follows from that of $f$ by the following calculation:
\begin{align*}
|\tilde f(x)\tilde f(y)|&\le\left|\tilde f(x)\frac{f(y)}{|f(x)|}\right|+\left|\frac{f(y)}{|f(x)|}\tilde f(y)\right|\\
&=\frac{|f(x)f(y)|}{|f(x)|}+\left|\frac{|f(y)|}{|f(x)|}-1\right|\le2\frac{|f(x)f(y)|}{|f(x)|},
\end{align*}
where $x,y\in\Sigma$.
Since $f$ maps a ray to a ray, we have $\tilde f^{-1}(x)=f^{-1}(x)/|f^{-1}(x)|$ for any $x\in\mathbb S^n$.
Hence the Lipschitz continuity of $\tilde f^{-1}$ follows from that of $f^{-1}$ by the same calculation as above.
\end{proof}

\begin{ack}
I would like to thank Professor Koichi Nagano for answering my question regarding Example \ref{ex:iso}.
I am also grateful to the referee for valuable comments.
\end{ack}

\end{document}